\documentclass[10pt, a4paper]{article}

\usepackage[utf8]{inputenc}
\usepackage[T1]{fontenc}
\usepackage{amsthm}
\usepackage{array}
\usepackage{amsmath}
\usepackage{amsfonts}
\usepackage{amssymb}
\usepackage{caption}
\usepackage{float}
\usepackage{graphicx}
\usepackage{ifthen}
\usepackage{tikz}
\usepackage{tikz-cd}
\usepackage{subcaption}
\usepackage{longtable}
\usepackage{fullpage}
\usepackage{textcomp}
\usepackage{mathtools}
\usepackage[all]{xy}
\usepackage{makecell}
\usepackage[hidelinks]{hyperref}
\usepackage{setspace}
\usepackage[a4paper]{geometry}
\usepackage{color}
\usepackage{amsmath}
\usepackage{algorithm}
\usepackage{afterpage}
\usepackage[noend]{algpseudocode}
\usepackage{listings}
\usepackage{chngcntr}

\usetikzlibrary{patterns}
\usetikzlibrary{intersections}

\newtheorem{theorem}{Theorem}
\newtheorem{construction}[theorem]{Construction}

\newtheorem{lemma}[theorem]{Lemma}

\newtheorem{conjecture}[theorem]{Conjecture}
\newtheorem{definition}[theorem]{Definition}
\newtheorem{example}[theorem]{Example}

\newcommand \Lawrence [1] {{#1}}

\algnewcommand{\LeftComment}[1]{\Statex \(\triangleright\) #1}

\counterwithin{theorem}{section}
\counterwithin{equation}{section}
\counterwithin{figure}{section}
\counterwithin{table}{section}

\def \Spec {\operatorname{Spec} \; }

\def \PP {\mathbb{P}}

\def \NN {\mathbb{N}}

\def \CC {\mathbb{C}}
\def \AA {\mathbb{A}}
\def \RR {\mathbb{R}}
\def \ZZ {\mathbb{Z}}

\def \lb {[ \! [ }
\def \rb {] \! ]}

\begin{document}

\title{Explicit equations for mirror families to log Calabi-Yau surfaces}
\author{Lawrence Jack Barrott}
\date{}

\maketitle

\begin{abstract}

Mirror symmetry for del Pezzo surfaces was studied in~\cite{AurouxKatzarkovOrlov} where they suggested that the mirror should take the form of a Landau-Ginzburg model with a particular type of elliptic fibration. This argument came from symplectic considerations of the derived categories involved. This problem was then considered again but from an algebro-geometric perspective by Gross, Hacking and Keel in~\cite{MirrorSymmetryforLogCY1}. Their construction allows one to construct a formal mirror family to a pair $(S,D)$ where $S$ is a smooth rational projective surface and $D$ a certain type of Weil divisor supporting an ample or anti-ample class. In the case where the self intersection matrix for $D$ is not negative semi-definite it was shown in~\cite{MirrorSymmetryforLogCY1} that this family may be lifted to an algebraic family over an affine base.

In this paper we perform this construction for all smooth del Pezzo surfaces of degree at least two and obtain explicit equations for the mirror families and explain some of the motivation for their construction. In the end we will provide enumerative data supporting the claim that these are in fact the desired mirror families.

\end{abstract}

\section{Introduction}

Our starting materials will be a pair $(S,D)$ where $S$ is a smooth del Pezzo surface of degree at least two and $D \in \mid -K_S \mid$ is a cycle of rational curves. Such pairs were first studied in~\cite{RationalSurfacesWithAnAntiCanonicalCycle} as relating to certain cusp singularities and therefore are called \emph{Looijenga pairs}. When viewed another way they are \emph{log Calabi-Yau surfaces}. Our goal will be to explicitly construct the mirror to $(S,D)$ in a variety of cases using the Gross-Siebert program.

Mirror symmetry predicts, according to Givental's 1994 ICM lecture, that associated to a toric Fano variety $V$ there should exist a mirror Landau-Ginzburg model (LG model). This is a pair $(\check{V}, \check{W})$ where $\check{V}$ is a smooth variety and $\check{W}: \check{V} \rightarrow \CC$ the \emph{super-potential}, a regular function. This was extended in work of Cho and Oh, who in~\cite{FloerCohomologyAndDiscInstantonsOfLagrangianTorusFibersInFanoToricManifolds} realised that a choice of effective anti-canonical divisor played a role in determining the Landau-Ginzburg model. We will introduce the toric construction and then explain how to correct this to the non-toric case to produce an explicit description of the mirror family of LG models in terms of algebraic equations over a known base.

  The paper~\cite{MirrorSymmetryforLogCY1} constructs a formal smoothing of the $n$-vertex, a cycle of planes meeting along their axes, over $\Spec k \lb NE(S) \rb$. The key result for us of~\cite{MirrorSymmetryforLogCY1} is the following.

\begin {theorem}

  Let $(S,D)$ be a pair as above and suppose that the self intersection matrix of $D$ is not negative semi-definite. Then the above formal smoothing lifts to an algebraic family over the entirety of $\Spec k [ NE(S) ]$.
  
\end{theorem}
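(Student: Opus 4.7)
\emph{Plan of proof.} The strategy is to analyze the explicit theta function description of the formal smoothing and show that, under the hypothesis on $D$, the structure constants of its algebra of global sections are polynomial rather than merely formal power series.

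First, I would recall that the formal family from~\cite{MirrorSymmetryforLogCY1} carries a canonical ``theta function'' basis $\{\vartheta_p\}$ indexed by integer points $p$ of the tropicalization $B(\ZZ)$ of $(S,D)$. Multiplication takes the form
\[ \vartheta_p \cdot \vartheta_q \; = \; \sum_{r} c_{pq}^r \, \vartheta_r, \qquad c_{pq}^r \; = \; \sum_{\beta \in NE(S)} N_{pq}^r(\beta) \, z^\beta \; \in \; k \lb NE(S) \rb, \]
where $N_{pq}^r(\beta)$ counts broken lines (equivalently, certain relative Gromov-Witten invariants of $(S,D)$) in curve class $\beta$ with prescribed asymptotic endpoints and incoming directions determined by $p,q,r$. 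Lifting the formal family to an algebraic family over $\Spec k[NE(S)]$ is equivalent to showing that, for each fixed triple $(p,q,r)$, only finitely many $\beta \in NE(S)$ contribute, so that $c_{pq}^r \in k[NE(S)]$.

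Next I would extract from the combinatorics of broken lines an a priori estimate on the contributing classes. Each broken line ending in the chamber prescribed by $r$ and with outgoing legs determined by $p,q$ determines, via its monomial decorations, a divisor class $\beta$ whose intersection with each boundary component $D_i$ is controlled by the integer vectors $p,q,r$ and by the bending of the line. Summing these local intersection contributions yields a linear bound on $\beta \cdot D_i$ in terms of $(p,q,r)$ alone. The hypothesis that the intersection form of $D$ is not negative semi-definite is equivalent to the existence of a strictly positive combination $H = \sum a_i D_i$ (with each $a_i > 0$) which is ample or anti-ample; intersecting with $H$ therefore bounds $H \cdot \beta$ above by an explicit constant $C(p,q,r)$.

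Since $S$ is a smooth rational surface, $NE(S)$ is a rational polyhedral cone, and the ample class $H$ restricts to a strictly positive linear functional on it; thus only finitely many $\beta \in NE(S)$ satisfy $H \cdot \beta \leq C(p,q,r)$. Combined with the previous estimate this shows each structure constant $c_{pq}^r$ is a polynomial, so the multiplication table defines a flat $k[NE(S)]$-algebra whose $\Spec$ is the desired algebraic lift. The main technical obstacle I anticipate lies in the intersection-theoretic estimate of the preceding paragraph: one must track the contribution of each straight segment of a broken line to $\beta \cdot D_i$ and argue that the compactness of the input data $p,q,r$ propagates through all possible scatterings to give a uniform bound. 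Once that is controlled, positivity of $H$ delivers finiteness, and the rest of the argument is essentially formal.
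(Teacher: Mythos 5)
The paper offers no argument of its own for this statement: it is quoted verbatim as Theorem~0.2 of Gross--Hacking--Keel and the ``proof'' is the citation, so you were in effect being asked to sketch one of the main theorems of that reference. Your high-level plan does match the mechanism used there and, in a concrete instance, in this paper's own combinatorial analysis around Algorithm~\ref{ThetaAlg}: a piecewise linear function on $B$ coming from a divisor supported on $D$ is monotone along broken lines (the function $E$ of the text is exactly such a function, built from $-K_S$), so positivity of that divisor controls both the number of bends and the curve classes that can decorate them, truncating each structure constant $c_{pq}^r$ to a polynomial in $k[NE(S)]$.

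There are, however, two genuine gaps. First, your claimed equivalence --- ``the intersection matrix of $D$ is not negative semi-definite'' if and only if ``there is $H=\sum a_i D_i$ with every $a_i>0$ ample'' --- is not correct at the level of generality at which the theorem is stated (arbitrary rational Looijenga pairs, not just del Pezzo surfaces). The hypothesis yields only a nef and big divisor supported on $D$, possibly with some $a_i=0$ and possibly only after contracting some interior $(-1)$-curves; this is weaker than ampleness but is what the bounding argument in fact uses. In the del Pezzo setting of this paper the choice $H=-K_S=D$ with all $a_i=1$ does work, but stating the equivalence as you did would be false for the theorem as written. Second, and more seriously, the inequality $H\cdot\beta\le C(p,q,r)$ for every class $\beta$ decorating a contributing pair of broken lines is asserted and then set aside as ``the main technical obstacle,'' with the remainder declared ``essentially formal.'' That inverts the weight of the argument: tracking the monomial decoration through every crossing and every bend and showing that each wall-crossing contributes a strictly positive increment of $\phi_H(\partial l/\partial t)$, at least proportional to $H\cdot\beta'$ for the class $\beta'$ picked up, \emph{is} the theorem. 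Without that estimate, and without the associated flatness check for the resulting $k[NE(S)]$-algebra, what you have is a correct plan rather than a proof.
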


\begin {proof}

  This is Theorem 0.2 of~\cite{MirrorSymmetryforLogCY1}.
  
\end{proof}

The entire Gross-Siebert program is driven by tropicalising the SYZ conjecture. We briefly describe this philosophy and will explain throughout the body of the paper how it motivates each step of the construction. First let us recall the SYZ conjecture:

\begin{conjecture}[The SYZ conjecture]

Let $X, \check{X}$ be a mirror pair of \emph{Calabi-Yau varieties}. Then there is an affine manifold with singularities $B$ and maps $\phi: X \rightarrow B$, $\check{\phi}: \check{X} \rightarrow B$ which are dual special Lagrangian fibrations. This is shown schematically below

\begin{figure}[h]
\centering
\begin{tikzpicture}
\draw (-6,-2) -- (-2,-2) -- (-2,2) -- (-6,2) -- (-6,-2);
\draw (2,-2) -- (6,-2) -- (6,2) -- (2,2) -- (2,-2);
\draw (-2.2, -1.8) node {$X$};
\draw (5.8, -1.8) node {$\check{X}$};
\draw (-4,0) ellipse (0.2 and 2);
\draw (-4,1) .. controls (-3.93,0) .. (-4,-1);
\draw (4,0) ellipse (0.2 and 2);
\draw (4,1) .. controls (4.07,0) .. (4,-1);
\draw[->] (-4,-2.2) -- (-2,-4);
\draw[->] (4,-2.2) -- (2,-4);
\draw (-4.4, 0) node {$T$};
\draw (3.5, 0) node {$T^\vee$};
\draw (-4, -3) node {$\phi$};
\draw (4, -3) node {$\phi^\vee$};
\draw[thick] (-2,-5) -- (-1, -4.5) -- (1, -5) -- (2, -6) -- (1.5, -6.5) -- (0, -7) -- (-2,-5);
\draw (-2,-5) -- (-1, -5.2) -- (-1, -4.5);
\draw (-1, -5.2) -- (0.6, -5.8) -- (1,-5);
\draw (0.6, -5.8) -- (2,-6);
\draw (0.6, -5.8) -- (0.6,-6) -- (1.5,-6.5);
\draw (0.6, -6) -- (0,-7);
\end{tikzpicture}
\end{figure}

\end{conjecture}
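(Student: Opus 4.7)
The SYZ conjecture is one of the deepest open problems in mirror symmetry, and I do not pretend to sketch a genuine proof in the generality stated; rather, I will lay out the strategic pillars on which any serious attack must rest. The plan is to produce both fibrations from a single affine structure on a base $B$ and then match symplectic data on $X$ with complex data on $\check{X}$ and vice versa. First one would choose a large complex structure limit for $X$ to fix a candidate affine base $B$ (concretely, the Gromov-Hausdorff limit of $X$ under a collapsing family of Ricci-flat Calabi-Yau metrics), thereby endowing $B$ with an integral affine structure away from a codimension-two singular locus $\Delta \subset B$.

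The construction of $\phi$ proceeds by producing a special Lagrangian torus fibration on $X$ whose fibers collapse onto $B$. Over the smooth locus $B^{sm} = B \setminus \Delta$ the fibers are genuine affine tori $T$, and the dual fibration $\check{\phi}$ is defined on the semi-flat piece $\check{X}^{sm}$ as the relative Picard scheme $\operatorname{Pic}^0(X^{sm}/B^{sm})$, with fibers the dual tori $T^\vee$. This semi-flat construction carries a canonical complex structure on $\check{X}^{sm}$ built from the affine structure on $B^{sm}$ together with its $\mathbb{Z}$-lattice, dual to the symplectic structure on $X^{sm}$; the Strominger-Yau-Zaslow duality is then manifest on the open locus.

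The real work lies in extending both $X^{sm}$ and $\check{X}^{sm}$ across the singular fibers over $\Delta$. Here one must introduce quantum corrections to the naive semi-flat complex structure on $\check{X}^{sm}$, assembled from counts of holomorphic discs in $X$ with boundary on the Lagrangian fibers; these corrections are encoded combinatorially as wall-crossing data in the Kontsevich-Soibelman sense, and it is precisely this wall structure that the Gross-Siebert program converts into the formal deformations of the $n$-vertex cited from \cite{MirrorSymmetryforLogCY1}. Verifying that the corrected $\check{X}$ glues into a smooth (or mildly singular) Calabi-Yau together with a compatible special Lagrangian fibration dual to $\phi$ is the crucial and currently intractable step.

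The main obstacle, without question, is this last step: controlling the disc counts globally and proving convergence of the wall-crossing data so that the formal mirror lifts to an honest complex variety with the predicted special Lagrangian fibration. Even in the log Calabi-Yau setting relevant to the rest of this paper, one circumvents the analytic difficulties only by working tropically and deducing algebraicity a posteriori, as in the theorem above; a bona fide proof of the SYZ conjecture would require a parallel analytic construction of the dual torus fibration together with a Ricci-flat metric, something achieved at present only for K3 surfaces and in asymptotic regimes.
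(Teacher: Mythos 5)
The statement you were asked to ``prove'' is, in the paper, a \emph{conjecture}: the SYZ conjecture is stated purely as motivation for the Gross--Siebert philosophy, and the paper offers no proof of it (nor claims one). You have correctly recognised this and, rather than pretending to a proof, given a strategic survey of how one would attack it --- large complex structure limit, Gromov--Hausdorff collapse to an affine base $B$ with singular locus $\Delta$, the semi-flat dual fibration via $\operatorname{Pic}^0(X^{sm}/B^{sm})$, and the quantum/wall-crossing corrections needed across $\Delta$. That is an honest and accurate account of the current state of the problem, and it dovetails with the way the paper itself uses SYZ only heuristically (tropicalising it) rather than as an established theorem. Your identification of the genuine obstruction --- proving convergence of the wall-crossing data and producing an actual complex variety with a compatible dual special Lagrangian fibration, rather than a formal or tropical surrogate --- is precisely the gap that the Gross--Hacking--Keel lifting theorem sidesteps by working algebraically in the log Calabi-Yau surface case. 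So there is no discrepancy to report: the paper has no proof because none is known, and you have not manufactured one.

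One small caution for the record: even the ``easy'' semi-flat step you describe requires a choice of B-field/polarisation to identify the dual torus fibration with a complex manifold, and the Legendre-transform duality between the affine structures on the A-side and B-side bases is part of the package; if you were to expand this sketch it would be worth making that explicit, since it is where the symplectic data on $X$ genuinely gets exchanged for complex data on $\check{X}$.
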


There is a natural collection of varieties with such a special Lagrangian fibration, toric varieties with their momentum maps. The momentum map fibres a toric variety over a polytope for that variety and the fibres degenerate in a natural way over the boundary. These toric varieties form an important laboratory in which we can study mirror symmetry. Indeed Batyrev and Borisov used this structure to construct mirror Calabi-Yau pairs for ``good'' hypersurfaces in toric varieties. Here the duality is induced by replacing the polytope by the dual polytope.

The philosophy of Gross and Siebert is to assume the existence of such a fibration $X \rightarrow B$ to a known base supporting a choice of piecewise linear function $\phi$ taking values in $NE(S)$ and understand how singularities of the fibration affect the mirror $\check{X}$. For example the image of a curve under the fibration is a topological space, called a \emph{tropical amoeba}, retracting onto a balanced piecewise linear graph, as described in~\cite{AmoebasOfAlgebraicVarietiesAndTropicalGeometry}. Such a skeleton is called a \emph{tropical curve}, and the process of passing to this limit is called \emph{tropicalisation}. Gross and Siebert use these tropical curves to replace curves living on $X$, calling their analogue \emph{broken lines}.

An important symplectic invariant of a symplectic manifold $(S,\omega)$ is the symplectic cohomology. This is a ring whose objects are classes of Lagrangian submanifolds and whose multiplication counts certain Maslov index two disks filling the gap between specified Lagrangians. Now points of $B$ correspond to special Lagrangians and the image of one of these triangles is a trivalent graph on $B$, bending where Maslov index zero disks have been glued on. The expectation is that the underlying scheme of the mirror should be the spectrum of this ring. This fails to be true once one moves away from the dimension two case. There are examples due to Pomerleano where every fibre of the family is singular and one must resolve these singularities to obtain the correct mirror family. In general we expect that the family will be projective over an affine base.

In the toric case the choice of base $B$ embeds into $\RR^2$ and there are no index zero disks. Furthermore the base $B$ has one-strata $v_1 ,\ldots ,v_n$ corresponding to toric invariant divisors, $D_1, \ldots, D_n$. The construction becomes the following, for every point $P \in B_\ZZ$ introduce a variable $\vartheta_P$. They define a multiplication rule on the $\CC$ linear span of these by declaring that $\vartheta_P \vartheta_Q = z^{\phi (P+Q) - \phi(P) - \phi(Q)} \vartheta_{P+Q}$\label{ToricThetaProduct}. This precisely replicates the counts of disks mentioned above. With this product rule one produces an associative ring whose spectrum fibres over $\Spec k[NS(S)]$ and the mirror family is this family together with the superpotential $\sum \vartheta_{v_i}$. 

Now in the general case the presence of singularities of the SYZ fibration produces Maslov index zero disks in a controlled way. Gross, Hacking and Keel mimic this by introducing the \emph{canonical scattering diagram} on $B$, a combinatorial structure which records the gluing possibilities for Maslov index zero disks. The techniques themselves were introduced by Kontsevich and Soibelman in their paper~\cite{AffineStructuresAndNonArchimedeanAnalyticSpaces} to study $K3$ surfaces. These were applied by Gross in~\cite{TropicalGeometryAndMirrorSymmetry} to study the prototype example of the construction, mirror symmetry for $\PP^2$, and by Gross and Siebert in~\cite{AnInvitationToToricDegenerations} to construct mirror partners to Calabi-Yau threefolds. It was expanded upon in work of both~\cite{TheTropicalVertex} and~\cite{ATropicalViewOfLandauGinzburgModels} to construct mirrors to Fano varieties.

We begin by constructing the base of the SYZ fibration, as described in~\cite{MirrorSymmetryforLogCY1} and then perform combinatorics to limit the information we must calculate. The salient point for the non negative semi-definite case is that there is a positivity requirement that restricts the possible pairs of broken lines which may contribute in the product. One may think of this as analogous to the convergence of quantum cohomology for Fano manifolds. For high degree the calculations may be explicitly checked by hand, whilst for low degree we must rely on computer algebra packages. The techniques mentioned here can be applied to $dP_1$ though the explicit equations produced will potentially be too complicated to reproduce in a meaningful way. 

Having found such equations this we will provide numerological evidence that the constructed families are indeed the correct mirror family. One prediction from mirror symmetry would be an isomorphism between (an appropriate enhancement of) the Fukaya category of $V$ and the category of matrix factorisations of the potential $\check {W}$. To make this more precise one can pass to the Hochschild cohomology of these categories.

  Conjecturally, and with promising results by Bourgeois, Ekholm and Eliashberg in~\cite{EffectOfLegendrianSurgery}, the Hochschild cohomology of the Fukaya category of $S$ is canonically isomorphic to the quantum cohomology of $V$. Recall that the quantum cohomology is the deformation over $\CC [ H_2(S)]$ of the usual intersection form on the cohomology ring $H^*(V)$. The quantum product $H_1*H_2$ of classes $H_1$ and $H_2$ is defined by
 \[ \langle H_1 * H_2, H_3 \rangle = \sum_{\alpha \in H_2 (V)} I_{0,3}(V, \alpha, H_1, H_2, H_3) z^\alpha \]
 Where $I_{0,3} (X, \alpha, H_1, H_2, H_3)$ counts rational curves in class $\alpha$ with three marked points lying on \Lawrence{classes Poincar\'e dual} to $H_1$, $H_2$ and $H_3$. This product converges under some mild positivity constraints and remains associative with the central fibre reproducing the classical cup product. The details of this construction are described in~\cite{MirrorSymmetryAndAlgebraicGeometry} Theorem 8.1.4.

 On the mirror side the Hochschild cohomology of the category of matrix factorisations is isomorphic to the Jacobian ring $Jac (\tilde {W})$ of the critical locus of $\check{W}$, at least in the case where the critical locus is isolated. Recall that this is the ring
 \[ k[x_1, \ldots x_n] / \langle \partial \check{W} / \partial x_i \rangle\]
Since mirror symmetry is supposed to provide us with an isomorphism between the two $dg$-categories it should also provide an isomorphism between the corresponding Hochschild cohomologies. In the final section we will look for numerological evidence that this is indeed the case. A general proof of the existence of this isomorphism should follow from a deeper understanding of the gluing requirements for Gromov-Witten invariants.

\subsubsection {Acknowledgements}

This work was an initial project in my PhD thesis, as suggested by Mark Gross. It was from him that I learnt much of the background and motivation for the Gross-Siebert program. Various discussions about the material of~\cite{MirrorSymmetryforLogCY1} with Tyler Kelly, Zhi Jin, Ben Morely and Andrea Petracci proved immensely helpful to my understanding of the material. This project was included in my thesis and I must thank Tom Coates and Pelham Wilson for examining me, and ironing out various parts of the exposition and of course Mark Gross who helped to review the initial draft of this paper and suggesting the problem.

This project was funded by a variety of sources: by an Internal Graduate Studentship provided by Trinity College, by a research studentship provided by the Cambridge Philosophical Society and by Final Term Funding provided by the Department for Pure Mathematics and Mathematical Statistic (DPMMS) in Cambridge. I wrote this paper as a research assistant at the NCTS in Taipei.  

\section {Constructions of mirror families to del Pezzo surfaces}

Throughout this paper let $S$ be a smooth complex del Pezzo surface and $D = D_1 \cup D_2 \cup \ldots D_n$ an anti-canonical cycle of rational curves with $n$ at least three (we will explain later how to deal with the case of $dP_2$). Recall that by the classification of del Pezzo surfaces this ensures that $S$ is isomorphic to one of the following: 

\begin{itemize}
\item $\PP ^1 \times \PP^1$.
\item The blow-up of $\PP^2$ in zero through eight points in general position.
\end{itemize}

The respective Chow groups $A_1 (S)$ are generated by

\begin{itemize}
\item The pullback of a point under the two projection maps.
\item A hyperplane class $H$ and the exceptional curves $E_1, \ldots , E_{9-d}$.
\end{itemize}
whilst the groups $A_0(S)$ and $A_2 (S)$ are isomorphic to $\ZZ$ generated by a point and a fundamental class respectively.

This pair can be thought of in many ways, but perhaps the most appealing to those studying mirror symmetry is as a \emph{log Calabi-Yau varieties}, varieties such that there is a non-vanishing differential form on $S\setminus D$ with at worst logarithmic singularities along $D$. Of these the surfaces $\PP^1 \times \PP^1$, $\PP^2, dP_8, dP_7$ and $dP_6$ are all toric and mirror families can be constructed using techniques we mentioned in the introduction. The next three surfaces were studied in~\cite{MirrorSymmetryForDelPezzoSurfacesVanishingCyclesAndCoherentSheaves}. To build up our intuition with the Gross-Siebert program we will show that these agree with the predictions there. We will then handle one of the remaining two cases, the blow up in seven points. Precisely the same techniques can be applied in the case of $dP_1$ but given the frankly atrocious nature of the equations for $dP_2$ we do not pursue this.

In the toric case the base is a fan for the variety in the sense of~\cite{ToricVarieties}. Recall that a fan is a simplicial complex of cones in $\RR^n$, where a cone is the $\RR^{+}$-linear span of a collection of vectors $v_1, \ldots, v_k$. Then according to~\cite{ToricVarieties} there is an equivalence of categories between the category of toric varieties and toric morphisms and the category of fans and their morphisms. The base of the SYZ fibration for a toric del Pezzo surface is just the fan. This can be recovered, up to a choice of embedding into $\RR^2$, as the dual intersection complex of $D$.

\begin{construction}

Let $D = D_1 \cup D_2 \cup \ldots \cup D_n$ be a cycle of rational curves on a smooth surface $S$ such that $D_i \cap D_j$ is a single point just when $i$ and $j$ differ by 1 mod $n$ and otherwise is empty (in the case $n=2$ we relax this to saying that there are two points in the unique intersection $D_1 \cap D_2$). Then $\Delta_{(S,D)}$ contains precisely one zero-dimensional cell 
$\{0\}$ corresponding to the interior $S \setminus D$. For each component $D_i$, $\Delta_{(S,D)}$
contains a one-dimensional cone with $v_i$ its primitive generator.
Attach the zero-dimensional cone as 0 inside each of these rays. Now introduce a two-dimensional cone in $\Delta_{(S,D)}$ for each intersection point of $D_i \cap D_j$, spanned by $v_i$ and $v_j$. This produces the \emph{dual intersection complex} $\Delta_{(S,D)}$ as a cone complex but it carries more structure. 
We write $B$ to be the underlying topological space of the cone complex
$\Delta_{(S,D)}$. It is homeomorphic to $\RR^2$. We give $B\setminus\{0\}$
an affine manifold structure by defining an affine coordinate chart by embedding
the union of the cones $\RR^{\geq 0} v_{i+1} \oplus \RR^{\geq 0} v_i $ and $\RR^{\geq 0} v_{i} \oplus \RR^{\geq 0} v_{i-1}$ into $\RR^2$ via the relations $v_{i-1} \mapsto (1,0)$, $v_i \mapsto (0,1)$ and $v_{i+1} \mapsto (-1,-D_i ^2)$. This expresses $\Delta_{(S,D)}$ not just as a complex of sets but an affine manifold with singularities (indeed a single singularity at the origin).

\end{construction}

This construction makes perfect sense outside of the toric case, and morally should give the correct base for the mirror, except that it may not embed via an affine map into $\RR^2$. This fits into the general picture that the base of the SYZ fibration should be an affine manifold with singularities, a topological manifold with charts away from codimension two such that the transition functions lie in the affine transformation group $\RR^n \ltimes Sl (\ZZ^n)$. That the dual intersection complex does embed into $\RR^2$ in the toric case is a consequence of the following lemma:

\begin{lemma}[Toric reconstruction]

Let $S$ be a \Lawrence{compact} toric surface and $D$ the toric boundary. Then $(S,D)$ is a Looijenga pair, $\Delta_{(S,D)}$ embeds into $\RR^2$ and any choice of embedding gives the fan for a toric variety isomorphic to $S$.

\begin{proof}

  See~\cite{ToricVarieties} chapter 3.
  
\end{proof}

\end{lemma}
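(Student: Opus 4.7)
The plan is to realise the dual intersection complex $\Delta_{(S,D)}$ directly in terms of the defining fan of $S$ and then invoke the fan-to-variety equivalence of categories. First I would verify that $(S,D)$ is a Looijenga pair: for a compact smooth toric surface with fan $\Sigma$, the toric boundary is $D = \sum D_i$ where $D_i$ is the invariant divisor associated to the ray with primitive generator $v_i$, and this lies in $|-K_S|$ by the standard formula for the canonical class of a toric variety. Compactness forces the rays to be arranged cyclically around the origin so that consecutive rays span the maximal $2$-cones; therefore $D_i \cap D_{i+1}$ is the single torus-fixed point corresponding to that cone, $D_i \cap D_j = \emptyset$ otherwise, and each $D_i \cong \PP^1$. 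This exhibits $D$ as an anti-canonical cycle of rational curves.

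Next I would exhibit the embedding by taking the primitive ray generators $v_1, \ldots, v_n$ of $\Sigma$ themselves, inside $\RR^2 = N \otimes \RR$, as the image of $\Delta_{(S,D)}$. The affine coordinate chart specified in the construction demands $v_{i-1} \mapsto (1,0)$, $v_i \mapsto (0,1)$, $v_{i+1} \mapsto (-1, -D_i^2)$. Smoothness of $S$ forces $\{v_{i-1}, v_i\}$ to be a $\ZZ$-basis of $N$, so the first two relations just fix the chart. The third is the standard toric self-intersection identity: for a smooth complete toric surface, three consecutive primitive ray generators satisfy $v_{i-1} + v_{i+1} + D_i^2 \cdot v_i = 0$, which is exactly the relation defining $D_i^2$ from the fan. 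Hence the affine charts of $\Delta_{(S,D)}$ coincide with the transition functions produced by the overlapping $2$-cones of $\Sigma$, and $\Delta_{(S,D)}$ embeds into $\RR^2$ as (a translate of) the fan itself.

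Finally, to recover $S$ from an arbitrary embedding, note that any other choice of coordinates corresponds to another $\ZZ$-basis of $N$ and so differs from the original by an element of $GL(2,\ZZ)$. The resulting image is a complete fan (by compactness of $S$), simplicial and smooth (inherited from $\Sigma$), and the equivalence of categories between fans and toric varieties from~\cite{ToricVarieties} produces a toric variety isomorphic to $S$, with the $GL(2,\ZZ)$ ambiguity corresponding to a lattice automorphism of $N$ inducing a toric isomorphism. The only non-formal step in the whole argument is the self-intersection identity $v_{i-1} + v_{i+1} = -D_i^2 v_i$; everything else is a translation between the combinatorics of $\Delta_{(S,D)}$ and the combinatorics of $\Sigma$, and both sides of that translation are covered by standard material in~\cite{ToricVarieties}.
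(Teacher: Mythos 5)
Your proposal is correct and is essentially a careful unpacking of the standard toric-geometry facts that the paper simply cites from Fulton: the cyclic arrangement of rays for a complete surface fan, the anticanonical formula $-K_S=\sum D_i$, and the wall-crossing identity $v_{i-1}+v_{i+1}+D_i^2\,v_i=0$ for a smooth complete toric surface, which is exactly what makes the chart prescription for $\Delta_{(S,D)}$ reproduce the fan. The one thing you quietly add is the smoothness hypothesis (needed for $\{v_{i-1},v_i\}$ to be a $\ZZ$-basis and for $D_i^2\in\ZZ$); this is implicit in the lemma's statement since the construction of $\Delta_{(S,D)}$ already invokes $D_i^2$ as an integer, and throughout the paper $S$ is smooth, so there is no real gap.
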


An integral affine manifold $M$ carries a sheaf of integral vector
fields, which we write $\Lambda_M$. On a chart this is isomorphic to the 
constant sheaf with coefficients $\ZZ^n$. 
The transition functions naturally give identifications of these constant
sheaves. In addition, we define $M(\ZZ)$ to be the set of points of $M$
with integer coordinates in some, hence all, integral affine coordinate
chart.

Given a choice of ample divisor $H$ on $S$ toric, the fan $\Delta_{(S,D)}$
carries a natural choice of piecewise linear function up to a choice of global linear function and we denote this $\phi$. Such a function is defined 
by how it changes where it is non-linear. If $\phi_{v_i,v_{i+1}}$ denotes
the linear extension of $\phi$ restricted to $\RR^{\ge 0}v_i+\RR^{\ge 0}v_{i+1}$, then we determine $\phi$ by insisting that
$\phi_{v_i,v_{i+1}}-\phi_{v_{i-1},v_i}=(H.D_i)n_i$, where $n_i$ is a primitive element of the dual space vanishing on $v_i$ and positive on $v_i \RR^{\ge 0} + v_{i+1} \RR^{\ge 0}$.

Let us calculate the mirror in one of the toric cases, $\PP^2$. As we said in the introduction that in the toric case there are no corrections from singularities of the fibration. We will see that the conjectured isomorphism exists.

\begin{example}

  Our first goal is to find the fan. Fortunately we learnt a fan for $\PP^2$ back in infancy: it has one cells generated by $(1,0), (0,1)$ and $(-1,-1)$ as shown in Figure~\ref{P2Picture}.

  \begin{figure}[h]
    \centering
    \begin{tikzpicture}
      \draw (0,0) -- (2,0);
      \draw (0,0) -- (0,2);
      \draw (0,0) -- (-1,-1);
      \filldraw (1,1) circle (1pt);
      \draw (1,0.7) node {$(1,1)$};
      \draw [dotted] (2,1) -- (1,1);
      \draw [dotted] (-1,-1) -- (1,1);
      \draw [dotted] (1,2) -- (1,1);
    \end{tikzpicture}
    \caption{}
    \label{P2Picture}
    \end{figure}
  
  Calculating the product defined above, using the ample divisor
$H$ being the class of a line in $\PP^2$, one finds $\vartheta_{(1,0)} \vartheta_{(0,1)} = z \vartheta_{(-1,-1)}$. As in the basis constructions of~\cite{TheGreedyBasisEqualsTheThetaBasis} these functions morally ought to produce an embedding of the mirror into affine space and their sum should be the super-potential. Doing this we obtain the mirror as being $\mathbb{G}_m^2$ with superpotential $x + y + z / xy$. As expected this is the mirror predicted by other constructions. Calculating the Jacobian ring of the singularity we find that it is $\ZZ [X, z] / \langle X^3 - z \rangle$, isomorphic to the quantum cohomology of $\PP^2$.

\end{example}

Gross and Siebert were influenced initially by the Mumford degeneration and we turn to this for guidance on how to construct a choice of $\phi$ in the non-toric case.This construction will also show how different choices of ample $H$ can give rise to different mirrors. 

We fix the data of all the Looijenga pairs $(S,D)$ we will study, so let us record here a choice of boundary divisor and affine manifold for all the non-toric del Pezzo surfaces. Here we record the boundary divisor in $\PP^2$ before blowing up.
The circles represent points to be blown up, with the exceptional divisor included in the boundary if the circle is red. Since we can't embed
the dual complex as an affine manifold in $\RR^2$, we instead provide a list
of cones in $\RR^2$ and an integral affine isomorphism between two of these
cones. The dual intersection complex is then obtained by identifying these
two cones using this isomorphism.
In the pictures below the cone shaded grey is identified integral linearly with the standard first quadrant, whilst the dotted region is removed entirely.

\begin{center}
\begin{tabular}{>{\centering\arraybackslash}m{1.5cm}|>{\centering\arraybackslash}m{2.8cm}|>{\centering\arraybackslash}m{6cm}|>{\centering\arraybackslash}m{2.8cm}}
  Surface & Boundary & Class of boundary & Dual complex \\
  $dP_5$ \arraybackslash \arraybackslash & \begin{tikzpicture} \draw [red] (0,0) circle (1pt); \draw [red] (1,0) circle (1pt); \draw (0,1) circle (1pt); \draw (0.2,0.2) circle (1pt); \draw [red] (-0.5,0) -- (1.5,0); \draw [red] (0,-0.5)--(0,1.5); \draw [red] (1.5, -0.125) -- (-0.5, 0.375); \end{tikzpicture} & \makecell{$\cup D_i = E_1 \cup (H - E_1 - E_2)$ \\ $ \cup E_2 \cup (H-E_2 - E_3) \cup (H-E_1 - E_4)$}  & \begin{tikzpicture} \fill[lightgray] (0,0) -- (1,0) -- (1,-1); \draw (0,0) -- (1,0); \draw (0,0)--(0,1); \draw (0,0) -- (-1,1); \draw (0,0) -- (-1,0); \draw (0,0) -- (0,-1); \draw[dashed] (0,0) -- (1,-1); \end{tikzpicture} \\
  $dP_4$ & \begin{tikzpicture} \draw [red] (0,0) circle (1pt); \draw (1,0) circle (1pt); \draw (0,1) circle (1pt); \draw (0.5,0.2) circle (1pt); \draw (0.2,0.5) circle (1pt); \draw [red] (-0.5,0) -- (1.5,0); \draw [red] (0,-0.5)--(0,1.5); \draw [red] (-0.5,1.2) -- (1.2,-0.5); \end{tikzpicture} & \makecell{$\cup D_i = E_1 \cup (H - E_1 - E_2) \cup$ \\ $(H- E_3 - E_4) \cup (H - E_1 - E_5)$} & \begin{tikzpicture} \fill[lightgray] (0,0) -- (0,-1) -- (1,-1); \draw (0,0) -- (1,0); \draw (0,0)--(0,1); \draw (0,0) -- (-1,1); \draw (0,0) -- (-1,0); \draw[dashed] (0,0) -- (0,-1); \fill[pattern = dots] (0,0) -- (1,0) -- (1,-1); \end{tikzpicture} \\
  $dP_3$ & \begin{tikzpicture} \draw (0.8,0) circle (1pt); \draw (-0.8,0) circle (1pt); \draw (0.3,0.866) circle (1pt); \draw (0.5,-0.416) circle (1pt); \draw (-0.5,-0.416) circle (1pt); \draw (-0.3,0.866) circle (1pt); \draw [red] (-1.2, -0.416) -- (1.2,-0.416); \draw [red] (-1.1,-0.520) -- (0.066, 1.5); \draw [red] (1.1,-0.52) -- (-0.066, 1.5); \end{tikzpicture} & \makecell{$\cup D_i = (H - E_1 - E_2) \cup$ \\ $(H- E_3 - E_4) \cup (H - E_5 - E_6)$} & \begin{tikzpicture} \fill[lightgray] (0,0) -- (0,-1) -- (-1,-1) -- (-1,0); \draw (0,0) -- (1,0); \draw (0,0)--(0,1); \draw (0,0) -- (-1,1); \draw[dashed] (0,0) -- (-1,0); \fill[pattern = dots] (0,0) -- (1,0) -- (1,-1) -- (0,-1); \end{tikzpicture} \\
  $dP_2$ & \begin{tikzpicture} \draw [red] (0.6,1) .. controls (-1.5, 0) .. (0.6, -1); \draw [red] (0,1)--(0,-1); \draw (-0.975, 0) circle (1pt); \draw (-0.6, 0.4) circle (1pt); \draw (-0.6, -0.4) circle (1pt); \draw (0, 0.3) circle (1pt); \draw (0, -0.3) circle (1pt); \draw (0.38, 0.9) circle (1pt); \draw (0.38, -0.9) circle (1pt); \end{tikzpicture} & \makecell {$\cup D_i = (H - E _1 - E_2) \cup$ \\ $(2H - E_3 - E_4 - E_5 - E_6 - E_7)$} & \begin{tikzpicture} \fill[lightgray] (0,0) -- (-1,0) -- (-1,1); \draw (0,0) -- (1,0); \draw (0,0)--(0,1); \draw [dashed] (0,0) -- (-1,1); \fill[pattern = dots] (-1,0) -- (1,0) -- (1,-1) -- (-1,-1); \end{tikzpicture} 
\end{tabular}
\end{center}

\subsection{The Mumford degeneration}

The construction of the mirror family was inspired by the Mumford degeneration of an Abelian variety to the union of toric varieties. Let us recall the construction of~\cite{AnAnalyticConstructionOfDegeneratingAbelianVarietiesOverCompleteRings}.

\begin{example}

  Let $N\cong \NN^k$ be a lattice, $B \subset N _{\otimes \NN} \RR$ be a lattice polyhedron, $\mathcal{P}$ a lattice polyhedral decomposition of $B$ and $\phi: B \rightarrow \RR$ a strictly convex piecewise linear integral function. One takes the graph over $\phi$ to produce a new polyhedron

\[\Gamma (B, \phi) := \{ (n, r) \in N_\RR \oplus \RR \mid n \in B, r \geq \phi (n)\}\]

This produces a lattice polyhedron unbounded in the positive direction on $\RR$. To construct a family from this we perform a cone construction, let $C (\Gamma)$ be the closure of the cone over $\Gamma (B, \phi)$:
\[
C(\gamma) = \overline{\{(n,r_1,r_2)\in N_{\RR}\oplus\RR\oplus\RR\,|\,
(n,r_1)\in r_2 \Gamma(B,\phi)\}}.
\]
 This carries an action of $\RR^+$ given by translating the second component of $\Gamma(B, \phi)$. The integral points of $C (\Gamma)$ form a graded monoid and we can take $Proj$ of this to produce a variety projective over $\AA^1$. We write this $\PP_{\Gamma (B,\phi)}$. The general fibre is a toric variety, 
whilst the fibre over the origin is a union of toric varieties.

\end{example}

The construction of~\cite{MirrorSymmetryforLogCY1} mimics this in reverse: it claims that the central fibre should be the $n$-vertex, which for $\Lawrence{n>2}$ is a cycle of $n$ copies of $\AA^2$ glued adjacently along their axes, and then attempts to smooth. We cannot hope to have global coordinates as in the toric Mumford degeneration, only local coordinates. To incorporate this data the authors of~\cite{MirrorSymmetryforLogCY1} define a twist of the tangent sheaf which has enough global sections, in particular the function $\phi$ lifts to a section of this bundle.

\begin{definition}
\label{def:sheaf-of-monoids}
  Let $(S,D)$ be a Looijenga pair with associated dual intersection complex $\Delta_{(S,D)}$ and let $\eta: NE (S) \rightarrow M$ be a homomorphism of monoids. We want to construct a multi-valued piecewise linear function on $\Delta_{(S,D)}$ which will bend only along the one-cells. This will be a collection of
piecewise linear functions defined on open subsets of $B$ which differ by linear
functions on overlaps. Such functions are determined by their bends at
one-cells, which are encoded as follows. For a
one-cell $\tau=\RR^{\geq 0} v_i$, choose an orientation $\sigma_+$ and 
$\sigma_-$ of the two two-cells separated by $\tau$ and let $n_\tau$ be the unique primitive linear function positive on $\sigma_+$ and annihilating $\tau$. We want
to construct a representative $\phi_i$ for $\phi$ on $\sigma_+\cup \sigma_-$.  Writing $\phi_+$ and $\phi_-$ for the linear function defined by $\phi_i$ on $\sigma_+$ and $\sigma_-$, the function $\phi_i$ is then defined up to a linear
function by the requirement that
\[ \phi_+ - \phi _- = n_\tau \otimes \eta ([D_i])\]
Such a function is convex in the sense of~\cite{MirrorSymmetryforLogCY1} Definition 1.11, and one says that it is \emph{strictly convex} if $\eta([D_i])$ is not invertible for any $i$. 

This function determines a $M^{gp}\otimes\RR$-torsor $\PP$ as defined 
in~\cite{GrossSiebert}, Construction 1.14, on $B\setminus \{0\}$
which is trivial on each $(\sigma_+\cup\sigma_-)\setminus\{0\}$, i.e.,
is given by $((\sigma_+\cup\sigma_-)\setminus \{0\})\times (M^{gp}\otimes\RR)$.
These trivial torsors are glued on the overlap of two adjacent such sets,
namely on $\RR^{\ge 0} v_{i} \oplus\RR^{\ge 0}v_{i+1}$, via the map
\[ (x,p) \mapsto (x, p + \phi_{i+1} (x) - \phi_i (x))\]
which induce isomorphisms on the monoid of points lying above $\phi$.
This construction is designed to allow us to run the Mumford construction locally, even if we cannot run it globally. 
Write
$\pi: \PP \rightarrow B\setminus\{0\}$, and we write $\mathcal{M}$ for the 
sheaf $\pi_* (\Lambda _{\PP})$, bearing in mind that $\PP$ also has the
structure of an affine manifold. Then there is a canonical exact sequence
  \[ 0 \rightarrow \underline{M^{gp}} \rightarrow \mathcal{M} \xrightarrow{r} \Lambda_{B} \rightarrow 0 \]
  We write $r$ for the second map in this sequence. This will not be mentioned again until we define the canonical scattering diagram so keep this in mind until then.
Furthermore, \cite{GrossHackingSiebert}, Definition 1.16 gives a subsheaf
of monoids $\mathcal{M}^+\subset \mathcal{M}$.
 
\end{definition}

If one performs this construction in the case of a toric variety with the function $\phi$ and pairs it with an ample class one obtains the height function we used in the Mumford degeneration. From this Gross, Hacking and Keel use such a $\phi$ to produce a canonical deformation of the $n$-vertex corresponding to the case where there are no corrections. We introduce the corrected version directly.

\section{Scattering diagrams}

In Mumford's degeneration of an Abelian variety above all the fibres of the SYZ torus fibration on a generic member of the family are smooth. This cannot be the case for a special Lagrangian fibration of a general variety, there must be some singular fibres in the interior and around these singular fibres there will be some monodromy action. This monodromy is an obstruction to a naive product rule $\vartheta_P \vartheta_Q = z^{\phi (P+Q) - \phi(P) - \phi(Q)} \vartheta_{+P+Q}$ being well-defined on the mirror. Thus one needs a
corrected notion of this product, which from symplectic geometry can be done by counting so-called Maslov index two disks as described in~\cite{MirrorSymmetryAndTDualityInTheComplementOfAnAnticanonicalDivisor}. These can
be generated by gluing on Maslov index zero disks onto standard 
Maslov index two disks. Rather than trying to make this symplectic
heuristic precise, we instead are motivated by this picture as follows.
If we stand well away from the singularities and tropicalise these Maslov
index zero disks they appear to be a collection of lines passing out from the origin together with the information of their class. This data is the inspiration for the definition below of a scattering diagram on the base $B$.

\begin{definition}

  Let $B$ be an affine manifold with a single singularity, with 
$B$ homeomorphic to $\RR^2$ with singularity at the origin, so that $B^*:=
B\setminus \{0\}$ is an affine manifold.
Let $\mathcal{M}$ be a locally constant sheaf of Abelian groups
on $B^*$ with a subsheaf of monoids $\mathcal{M}^+\subset\mathcal{M}$
and equipped with a map $r:\mathcal{M} \rightarrow \Lambda_B$.
Let $\mathcal{J}$ be a sheaf of ideals in $\mathcal{M}^+$ with stalk $\mathcal{J}_x$ maximal in $\mathcal{M}^+_x$ for all $x \in B^*$.
Let
$\mathcal{R}$ denote the sheaf of rings locally given by the completion of 
$k[\mathcal{M}^+]$ at $\mathcal{J}$. 
 A \emph{scattering diagram with values in the pair $(\mathcal{M}, \mathcal{J})$ on $B$} is a function $f$ which assigns to each rational ray from the origin an section of the restriction of $\mathcal{R}$ to the ray.
We require the following properties of this function:

  \begin{itemize}
    \item For each $\mathfrak{d}$ one has $f(\mathfrak{d}) = 1$ mod 
$\mathcal{J}|_{\mathfrak{d}}$.
    \item For each $n$ there are only finitely many $\mathfrak{d}$ for which $f(\mathfrak{d})$ is not congruent to $1$ mod $\mathcal{J}|_{\mathfrak{d}}^n$. These $\mathfrak{d}$ are called \emph{walls}.
    \item For each ray $\mathfrak{d}$ and for each monomial $z^{p}$ appearing in $f(\mathfrak{d})$ one has $r(p)$ tangent to $\mathfrak{d}$. A line for which $r(p)$ is a positive generator of $\mathfrak{d}$ for all $p$ with $c_p\not=0$
 is called an \emph{incoming ray}. If instead
$r(p)$ is a negative generator of $\mathfrak{d}$ for all $p$ with $c_p\not=0$,
it is called an \emph{outgoing ray}.
  \end{itemize}
  
  We denote such an object by the tuple $(B, f, \mathcal{M}, \mathcal{J})$. We say that $(B,f,\mathcal{M},\mathcal{J})$ is obtained from $(B,f',\mathcal{M},\mathcal{J})$ by \emph{adding outgoing rays} if for each ray $\mathfrak{d}$ one can write $f(\mathfrak{d}) = f' (\mathfrak{d}) (1 + \sum (c_p z^p))$ where for each monoid element $p$ with $c_p\not=0$ the vector $-r(p)$ is a generator of $\mathfrak{d}$.

\end{definition}

In our case the sheaf of monoids $\mathcal{M}^+$ will be as given in
Definition \ref{def:sheaf-of-monoids}, with the monoid $M$ being
 a finitely generated sharp submonoid of $H_2(S,\ZZ)$
containing $NE(S)$, the monoid generated by effective curves on $S$. Being sharp means the only invertible element of $M$ is the identity element, and so the maximal ideal is just the complement of the identity. We introduce the choice of scattering diagram, the \Lawrence{\emph{canonical scattering diagram},} and then motivate and define each term appearing. A ray $\mathfrak{d} = (a v_i + b v_{i+1}) \RR^{\geq 0}$ specifies a blow up of $S$ given by refining the fan until $\mathfrak{d}$ is a one-cell and all the two-cells are integrally isomorphic to $(\RR^{\geq 0})^2$. The ray $\mathfrak{d}$ then corresponds to a component $C$ in the inverse image of $D$. To this ray then we assign the power series
\[f(\mathfrak{d}) := \exp \left[ \sum_\beta k_{\beta} N_\beta z^{\eta (\pi_* (\beta)) - k_\beta m_{\mathfrak{d}}'}\right],\]
where $m_{\mathfrak{d}}'$ is the unique lift of a primitive outward
pointing tangent vector $m_{\mathfrak{d}}$ to $\mathfrak{d}$ not contained
in the relative interior of $\mathcal{M}|_{\mathfrak{d}}$.
The number $N_{\beta}$ counts the number of relative curves mapping to $(S,D)$ tangent to $C$ to maximal order $k_\beta$ at a single point as outlined in~\cite{MirrorSymmetryforLogCY1} at the beginning of section 3.
 By the Riemann-Roch formula this is of dimension
\[ dim \: S - 3 - K_S. \beta - k_\beta + 1 = 0.\]
By~\cite{GromovWittenInvariantsInAlgebraicGeometry} this produces a Gromov-Witten invariant. This construction may also be run using logarithmic Gromov-Witten invariants rather than these blow ups, see~\cite{LogarithmicGromovWittenInvariants} for the definition, and this will be explored in future work of Gross and Siebert.

Conjecturally this encodes the glueing data for a generating set of the open Gromov-Witten invariants. It should be possible to recreate the entire Gromov-Witten theory of $S$, both open and closed from this data but this problem is very difficult.

\subsection{Broken lines}

To encode the Maslov index two disks themselves we collapse them onto a skeleton in the base of the SYZ fibration, having pushed all the singularities to the origin. Then following~\cite{AmoebasOfAlgebraicVarietiesAndTropicalGeometry} the disks appear as balanced tropical curves together with a mark of the class that they lie in. The skeleta of these disks are trivalent graphs with one leg ending at the origin and the other two legs passing to infinity. The definition below defines how one of these legs behave, the full picture only arising once we have discussed pairs of pants and even then we suppress the third leg which by necessity ends at origin.

\begin{definition}

  A \emph{broken line from $v\in B(\ZZ)$ to $P \in B$} on a scattering diagram $(B, f, R, J)$ is a choice of piecewise linear function $l: \RR^{\leq 0} \rightarrow B$ and a map $m: \RR^{\leq 0} \rightarrow \prod_{t\in\RR^{\leq 0}}
\mathcal{R}_{l(t)}$ such that the following hold:

  \begin{itemize}
  \item $l$ has only finitely many points where it is non-linear and these only occur where $l$ maps into a rational ray of $B$.
  \item $l (0) = P$.
  \item $l(t)$ lies in the same cone as $v$ and is parallel to $v$ for all $t$ sufficiently negative.
  \item $l$ does not map an interval to a ray through the origin.
    \item $m(t)\in \mathcal{R}_{l(t)}$ for all $t$ and is a monomial in
this ring, written as $c_t z^{m_t}$. Further, on each domain of linearity
of $l$, $m(t)$ is given by a section of $\mathcal{R}$ pulled back to 
this domain of linearity.
  \item For $t$ very negative, $m_t\in \mathcal{M}^+_{l(t)}$ is the
unique element not lying in the interior of this monoid satisfying
$r(m_t) = v$.
  \item $r(m(t_0)) = -{ \partial l \over \partial t}\big|_{t=t_0}$ wherever $l$ is linear.
  \item $m$ only changes at those points where $l$ is non-linear.
  \item Let $t \in \RR_{\leq 0}$ be a point where $l$ is non-linear. We write $\partial(l_+)$, $\partial(l_-)$, $m_+$ and $m_-$ for the values of $\partial l \Lawrence{/} \partial t$ and $m$ on either side of $t$. Suppose that $l(t)$ lies on a ray $\mathfrak{d}$ with primitive normal vector $n_{\mathfrak{d}}$, negative on $\partial (l_-)$. Then $m_+$ is a monomial term of $m_- f_{\mathfrak{d}}^{\langle n_{\mathfrak{d}}, \partial (l_-)\rangle}$.
\end{itemize}

\end{definition}

There is more technical content in~\cite{MirrorSymmetryforLogCY1} exploring how to deform the Mumford construction to produce a formal smoothing of the $n$-vertex. We do not concern ourselves with those results at the moment since we now have enough to define the multiplication rule on the $\vartheta$-functions. In ~\cite{GrossHackingSiebert} Gross, Hacking and Siebert introduce an entirely abstract construction of the $\vartheta$-functions. 

The key data we will need is the count of pairs of pants, which are expressed in terms of broken lines as pairs of broken lines $(l_P, m_P)$  and $(l_Q, m_Q)$ from $P$ and $Q$ respectively to an irrational point near to $R$ such that 
$(\partial l_P/\partial t)|_{t=0} + (\partial l_Q/\partial t)_{t=0} = R$. Let $T_{P,Q \rightarrow R}$ denote the set of such pairs.

We have seen on page~\pageref{ToricThetaProduct} how to define a product on the $\vartheta$-functions in the toric case, so let us now study the general case. The outcome of our conversation about Hochschild cohomology was that the ring of regular functions on the mirror is the degree zero part of symplectic cohomology of the original variety. Following~\cite{CanonicalBasesForClusterAlgebras} for each integral point $P$ in $B$ introduce a symbol $\vartheta_P$. As a $k$-vector space the ring $QH (\check{W})$ is freely generated by the $\vartheta_P$. We take the content of~\cite{MirrorSymmetryforLogCY1} Theorem 2.34 as the definition of the product of $\vartheta_P$ and $\vartheta_Q$, so the product $\vartheta_P \vartheta_Q$ is equal to

\[ \sum_{R} \sum_{((l_P, m_P), (l_Q, m_Q)) \in T_{P,Q \rightarrow R}} m_P (0) m_Q (0) \vartheta_R\]

This is analogous to the approach of~\cite{GrossHackingSiebert}. A key result of~\cite{MirrorSymmetryforLogCY1} is that for fixed order $J^n$ and for a consistent scattering diagram in the sense of~\cite{MirrorSymmetryforLogCY1}, Definition 2.26, this does not depend on the choice of irrational point near $R$. This consistency property will hold in particular for the canonical scattering diagrams considered below. This sum need not terminate and indeed will in general only produce a power series. However, in the case that $D$ supports an ample divisor, this power series will in fact be a polynomial. Our goal in what follows is to limit the terms which can occur and describe a generating basis of the $\vartheta$-functions.

\subsection{The combinatorics of scattering diagrams}

We now explain how to use these functions to embed the mirror into affine space. At the moment it is not clear that the relations between the $\vartheta$ functions should be algebraic. Let us explain how to exploit the structure of a scattering diagram to prove algebraic convergence. To do this we require an example to work with, for simplicity the del Pezzo surface $dP_5$.

\begin{example}Figure~\ref{fig:dP5Structure} below gives a choice of boundary divisor,
where circles denote blown-up points and red lines denote components of $D$. 
\begin{figure}[h]
  \centering
  \begin{tikzpicture}
    \draw [red] (0,0) circle [radius=0.1];
    \draw [red] (4,0) circle [radius=0.1];
    \draw [dotted] (0,4) circle [radius=0.1];
    \draw [dotted] (1,1.8) circle [radius=0.1];
    \draw [red] (-1, 0) -- (5, 0);
    \draw [red] (0, -1) -- (0, 5);
    \draw [red] (-1, 3) -- (5, -0.6);
  \end{tikzpicture}
  \caption{}
  \label{fig:dP5Structure}
\end{figure}
From this we construct the base of the scattering diagram as in Figure~\ref{fig:dP5DualComplex}, where the grey section is glued isomorphically to the standard upper right quadrant

\begin{figure}[h]
\centering
\begin{tikzpicture}
\path[fill=lightgray] (0,0) -- (2,0) -- (2,-2);
\draw [dashed] (0,0) -- (2,0);
\draw (0,0) -- (0,2);
\draw (0,0) -- (-2,2);
\draw (0,0) -- (-2,0);
\draw (0,0) -- (0,-2);
\draw [dashed] (0,0) -- (2,-2);
\draw (1,1) node {$1$};
\draw (-0.5,1) node {$2$};
\draw (-1,0.5) node {$3$};
\draw (-1,-1) node {$4$};
\draw (0.5,-1) node {$5$};
\end{tikzpicture}
\caption{}
\label{fig:dP5DualComplex}
\end{figure}

In this case one can construct the entire scattering diagram: it is finite and only non-trivial on the one-cells. We will not do this since in future examples the scattering diagram will be non-trivial on a dense set of rays.

Each point of $B$ defines a curve class as via $a \; v_i + b \; v_{i+1} \mapsto a D_i + b D_{i+1}$. Define the piecewise linear function $E$ on $B$ given by $-K_S \cdot (a D_i + b D_{i+1})$. From the Fano condition this defines a strictly convex piecewise linear function on the space $B$ which is positive everywhere
but at the origin. The function $E$ extends to a linear function on the tangent space at each point in the interior of a maximal cell. This allows us to evaluate
$E$ on tangent vectors to broken lines away from one-cells. 

In fact, given a broken line, $E(\partial l/\partial t)$ is increasing
as a function of $t$. Indeed,
at a point $t$ where $l$ is non-linear by definition the tangent vector
changes by a positive generator of the ray $\mathfrak{d}$, and $E$ is 
positive on this generator.
Similarly if $l$ crosses from one maximal cell of $B$ to another then by
convexity of $E$ the value of $E (\partial l / \partial t)$ increases. 

We use this as follows.
Suppose that two broken lines $(l_1, m_1)$ and $(l_2. m_2)$ from $v_1$ and $v_2$ combine to form a pair of pants at $P$. Then by definition we must have 
\[
(\partial l_1/\partial t)|_{t=0} + (\partial l_2/\partial t)|_{t=0} + P = 0.
\]
Since $E(P)\ge 0$, and $-E(v_i)\le E((\partial l_i/\partial t)|_{t=0})$, we
see that
$E(v_1)$ and $E(v_2)$ control how many new maximal cells $l_1$ and $l_2$ can enter, and how many times they can bend.

 Let us begin by understanding what could possibly contribute to the product $\vartheta_{(-1,0)} \vartheta_{(0,1)}$. Let us number the two-cells anti-clockwise starting at the upper right cell as in Figure~\ref{fig:dP5DualComplex}.
  
  Let $l_1$ be a broken line coming from $(-1,0)$ and $l_2$ a broken line from $(0,1)$. We have that $E((-1,0)) = E ((0,1)) = 1$. This immediately implies
that $0 \le E(P)\le 2$. Further, if $E(P)=2-n$, then the total number
of bends and crossings of one-dimensional cells of $l_1$ and $l_2$ is $n$.
First note that $n=0$ does not occur: since $v_1$ and $v_2$ are in different
cones, at least one of the broken lines must cross a one-cell.
If $n=1$, then we cross one one-cell and further bends or crossings can
occur. There is then only one possibility, 
the obvious pair of pants as described in Figure~\ref{Unbent} contributing to $\vartheta_{(-1,1)}$. 
Finally let us study the case $n=2$. In this case, $E(P)=0$, hence $P=0$,
and we can choose a point near the origin, say in the second two-cell.
If $l_2$ enters the grey region as detailed in Figure~\ref{Transition} it emerges moving in the direction $(-1,0)$ and so has $E(\partial l / \partial t) = 1$. 
Therefore in order to reach the second cell, it must cross two more one-cells,
and hence it can't contribute to a pair of pants. Thus $l_2$ must start
in cell 2 and never leaves it.

A similar argument shows that $l_1$ can only pass from cell 3 to cell 2,
and either $l_1$ or $l_2$ bends at a ray $\RR^{\ge 0}v$ with $E(v)=1$
and $v\in B(\ZZ)$. There is only one choice for such a ray, namely
the one-cell separating cells 2 and 3. See Figure~\ref{Bend}.
\label{combinatorialfiniteness}
\end{example}

In general, there will only be a finite number of rays with bounded $E$
and only a finite combination of bends. Thus in more complicated cases
this analysis can be implemented in a computer algebra package via the algorithm outlined in Algorithm~\ref{ThetaAlg} and a non-optimised version written in Python is available from me on request.

\begin{algorithm}
\caption{Theta relations algorithm}\label{ThetaAlg}
\begin{algorithmic}[1]
\State {\textbf{object} \textsc{AffineManifold}}
\LeftComment{Define a data structure consisting of a fan $\Sigma$, a piecewise linear function $\phi$ and a collection of integer points of $\Sigma$, $\Sigma_{\ZZ}$.}
\State {\textbf{object} \textsc{BrokenLine}}
\LeftComment {Define a data structure consisting of an initial direction $v$, a monomial, a tangent direction thought of as $\partial l/ \partial t \mid _{t=0}$ and a list of points $P_i$ generating rays along which the broken line bent}
\Function{ProductFormula} {$l_1$, $l_2$, affinemanifold}
\LeftComment {Define a function of two broken lines with tangent directions $v_1$ and $v_2$ in an affine manifold, affinemanifold which finds all possible terms appearing in the product $\vartheta_{v_1} \vartheta_{v_2}$}
\State{\textbf{int} maxE := $\phi (v_1) + \phi (v_2)$}
\State{\textbf{def} endpoints := The set of points in $\Sigma_{\ZZ}$ such that $\phi(P) \leq maxE$.}
\For {P in endpoints}
\If {$l_1$ and $l_2$ could pass through P and $v_1 + v_2 + P =0$}
\State {\textbf{print} the data of how $l_1$ and $l_2$ have bent}
\EndIf
\If {$l_1$ could pass through $P$}
\State {\textbf{def} newline := $l_1$ but now also scattering off of $\RR^{\geq 0} P$ as well}
\State {ProductFormula (newline, $l_2$)}
\EndIf
\If {$l_2$ could pass through $P$}
\State {\textbf{def} newline := $l_2$ but now also scattering off of $\RR^{\geq 0} P$ as well}
\State {ProductFormula ($l_1$, newline)}
\EndIf
\EndFor
\EndFunction
\State {\textbf{def} affinemanifold := AffineManifold( \ldots )}
\LeftComment {Define the base of the appropriate scattering diagram}
\State {\textbf{def} $L_1$ := BrokenLine ( \ldots )}
\LeftComment {Define $L_1$ to be a broken line, not scattering off of any rays with initial direction $v_1$}
\State {\textbf{def} $L_2$ := BrokenLine ( \ldots )}
\LeftComment {Define $L_2$ to be a broken line, not scattering off of any rays with initial direction $v_2$}
\State{ProductFormula ($L_1$, $L_2$, affinemanifold)}
\end{algorithmic}
\end{algorithm}

\begin{figure}[h]
    \centering
    \begin{subfigure}[b]{0.3\textwidth}
        \begin{tikzpicture}
          \path[fill=lightgray] (0,0) -- (2,0) -- (2,-2);
          \draw [dashed] (0,0) -- (2,0);
          \draw (0,0) -- (0,2);
          \draw (0,0) -- (-2,2);
          \draw (0,0) -- (-2,0);
          \draw (0,0) -- (0,-2);
          \draw [dashed] (0,0) -- (2,-2);
          \draw [red, -latex] (-2,1) -- (-1,1) ;
          \draw [red, -latex] (-1,2) -- (-1,1) ;
          \draw [red, -latex] (0.01,0.01) -- (-0.99,1.01) ;
      \end{tikzpicture}
      \caption{}
      \label{Unbent}
    \end{subfigure}
    \begin{subfigure}[b]{0.3\textwidth}
      \begin{tikzpicture}
          \path[fill=lightgray] (0,0) -- (2,0) -- (2,-2);
          \draw [dashed] (0,0) -- (2,0);
          \draw (0,0) -- (0,2);
          \draw (0,0) -- (-2,2);
          \draw (0,0) -- (-2,0);
          \draw (0,0) -- (0,-2);
          \draw [dashed] (0,0) -- (2,-2);
          \draw [red] (-2,0.5) -- (-0.5,0.5) ;
          \draw [red, -latex] (-0.5,0.5) -- (-0.5,1) ;
          \draw [red, -latex] (-0.5,2) -- (-0.5,1) ;
      \end{tikzpicture}
        \caption{}
        \label{Bend}
    \end{subfigure}
    \begin{subfigure}[b]{0.3\textwidth}
            \begin{tikzpicture}
          \path[fill=lightgray] (0,0) -- (2,0) -- (2,-2);
          \draw [dashed] (0,0) -- (2,0);
          \draw (0,0) -- (0,2);
          \draw (0,0) -- (-2,2);
          \draw (0,0) -- (-2,0);
          \draw (0,0) -- (0,-2);
          \draw [dashed] (0,0) -- (2,-2);
          \draw [dotted] (0,0) -- (-2,1);
          \draw [dotted] (0,0) -- (2,2);
          \draw [red] (0.5,2) -- (0.5,0);
          \draw [red, -latex] (0.5,-0.5) -- (-2,-0.5);
      \end{tikzpicture}
        \caption{}
        \label{Transition}
    \end{subfigure}
    \caption{}
    \label{BrokenLines}
\end{figure}

Doing this systematically for each pair $\vartheta_{v_{i-1}} \vartheta_{v_{i+1}}$ we can find schematic relations for the products which highlight which relative invariants we need to find. Filling out the relations for $dP_5$ we find the following relations.

\begin{align}
\vartheta_{(1,0)} \vartheta_{(-1,1)} &= g_1\vartheta_{(0,1)} + f_1 \\
\vartheta_{(0,1)} \vartheta_{(-1,0)} &= g_2\vartheta_{(-1,1)} + f_2 \\
\vartheta_{(-1,1)} \vartheta_{(0,-1)} &= g_3\vartheta_{(-1,0)} + f_3 \\
\vartheta_{(-1,0)} \vartheta_{(1,0)} &= g_4\vartheta_{(0,-1)} + f_4\\
\vartheta_{(0,-1)} \vartheta_{(0,1)} &= g_5\vartheta_{(1,0)} + f_5
\end{align}

The terms $f_i$ count curves meeting $D_i$ at a single point with tangency order one together with some monomials coming from broken lines looping around the origin.

As in the work of Hulya Arguz~\cite{HulyaThesis} the easy case is when the boundary consists of at least four components and the corresponding algebra is a quadratic algebra. The three component case is tractable but to compute the two component case $dP_2$ it is best to blow up the two intersection points of the two divisors. This refines the dual complex but keeps the interior $S \setminus D$ invariant. As a consequence, this changes the mirror family
because the base $\Spec k[P]$ increases in dimension. However, the mirror
family to the original surface can be found by restricting to a subfamily,
see \cite{MirrorSymmetryforLogCY1}, Section 6.2 for details.
In keeping with our previous notation we denote the exceptional curves over these by $E_8$ and $E_9$. The choice of dual complex now becomes:
\[\begin{tikzpicture} \fill[lightgray] (0,0) -- (-2,0) -- (-2,2); \draw (0,0) -- (2,0); \draw (0,0) -- (-1,2); \draw (0,0) -- (2,2); \draw (0,0)--(0,2); \draw [dashed] (0,0) -- (-2,2); \fill[pattern = dots] (-2,0) -- (2,0) -- (2,-2) -- (-2,-2); \end{tikzpicture}\]
\Lawrence{we re-label the theta functions by the corresponding cohomology class, so $\vartheta_{D_i}$ for $\vartheta_i$, further we write $\vartheta_{nD_i}$ for the function $\vartheta_{nv_i}$. With these labels we can apply the techniques of Example~\ref{combinatorialfiniteness} to show that the possible terms appearing in the relations between the theta functions become:
\begin{align}
\vartheta_C \vartheta_L &= r_1 \vartheta_{E_8} + r_2 \vartheta_{E_9} + r_3 + c_1(E_8) \label{formulaestart}\\
\vartheta_{E_8} \vartheta _{E_9} &= r_4 \vartheta_{3L}  + \vartheta_{2L} c_1 (L) + \vartheta_{L} c_2 (L) + c_3 (C) + r_5 \nonumber \\
& \quad + \vartheta_C c_2 (C) + \vartheta_{2C} c_1 (C) + r_6 \vartheta_{3C} \\
\vartheta_{C}^3 &= \vartheta_{3C} + c_1 (L) \vartheta_{C} + r_7 + c_1 (E_9) + r_8 + r_9 \vartheta_{E_8} + r_{10} \vartheta_{E_9}\\
\vartheta_L^3 &= \vartheta_{3L} + c_1 (C) \vartheta_L + r_{11} + c_1 (E_8) +  r_{12} + r_{13} \vartheta_{E_9} + r_{14} \vartheta_{E_8}\\
\vartheta_{C}^2 &= \vartheta_{2C} + r_{15} \vartheta_{C} + c_1 (L) \\
\vartheta_{L}^2 &= \vartheta_{2L} + r_{16} \vartheta_{L} + c_1 (C) \label{formulaeend}
  \end{align}
where the functions $r_i$ come from the combinatorics of broken lines which don't bend. The monomials which they carry come from the fact that the values of $\phi (\partial l/\partial t)$ are different at the start and end of the broken lines. The functions $c_n (D)$ are counts of broken lines bending due to curves tangent to a class $D$ at a single point to order $n$. It is this finite list of open Gromov-Witten invariants which we must calculate.} To calculate these invariants we have two options, either one could directly compute the moduli space and find the degree of the virtual fundamental class or one can use properties of the scattering diagram. Since we have not yet explained the role monodromy plays in the scattering diagram we will do the latter since it forms an important tool in the theory.

\section{Explicit Gromov-Witten counts and Monodromy}
\label{sec:explicit-GW}

We cannot apply the Mumford construction globally in the non-toric case since the function $\phi$ is not global but we can apply it locally to each face. To do so if $\tau$ is a one- or two-dimensional face of $\Delta_{(S,D)}$, we write $\Delta_{(S,D)}/\tau$ for the localised fan at $\tau$, i.e., the fan given by adding $\RR\tau$ to each cone containing $\tau$ and
removing cones of $\Delta_{(S,D)}$ not containing $\tau$. Then after choosing a representative for $\phi$, $\phi_{\tau}$, we have a piecewise linear function single-valued on $\Delta_{(S,D)}/\tau$ and one can consider the graph $\Gamma (\Delta_{(S,D)}/\tau, \phi_{\tau})$. If $\tau$ lies as the one-cell separating two two-cells, $\sigma_+$ and $\sigma_-$, let $\phi_{\sigma_\pm}$ be the linear
extensions of $\phi_{\tau}$ restricted to the maximal cones $\sigma_{\pm}$.
Then we have canonical inclusions of graphs and an equality
\begin{equation}
\label{eq:Gamma-rel}
\Gamma(\Delta_{(S,D)}/\tau, \phi_{\tau}) = \Gamma(\Delta_{(S,D)}/\sigma_+, \phi_{\sigma_+}) \cap \Gamma (\Delta_{(S,D)}/\sigma_-, \phi_{\sigma_-})
\end{equation}
All \Lawrence{the monoids $\Gamma(\Delta_{(S,D)}/\rho, \phi_{\rho})$} have actions by $M$, giving rings 
$k[\Gamma(\Delta_{(S,D)}/\rho),\phi_{\rho}]$ with $k[M]$-algebra structures,
and different choices of representative for $\phi$ lead to canonically 
isomorphic $k[M]$-algebras. Choose a monomial ideal $I\subset M$ such that
$A_I:=k[M]/I$ is Artinian, and write
\[
k[\Gamma(\Delta_{(S,D)}/\rho),\phi_{\rho}]_I:=
k[\Gamma(\Delta_{(S,D)}/\rho),\phi_{\rho}]\otimes_{k[M]} A_I.
\]
Let 
\[
U_{\rho}=\Spec k[\Gamma(\Delta_{(S,D)}/\rho),\phi_{\rho}]_I.
\]
Then the relation \eqref{eq:Gamma-rel} leads to natural open inclusions
$U_{\sigma_{\pm}}\subset U_{\tau}$, which gives us gluing data for a scheme.
The paper~\cite{MirrorSymmetryforLogCY1} gives explicit formulae for the rings involved and proves that this gluing data does indeed satisfy the descent condition. This is the strict analogue of the Mumford degeneration, but does not contain any data about the instanton corrections. In particular, with the above
gluing, it is not obvious that the glued scheme will carry enough
global functions to be embedded into an affine variety. For example,
a naive approach to finding global functions is to attempt to extend
monomial functions from $U_{\rho}$, but this frequently
doesn't work precisely because of monodromy. The authors of~\cite{MirrorSymmetryforLogCY1} correct the gluing using 
automorphisms of the rings $k[\Gamma (\Delta_{(S,D)}/\sigma, \phi_{\sigma})]_I$ 
for the two-cells $\sigma$ and taking a limit over all $I$.

Suppose that we have a scattering diagram $ (f, \Delta_{(S,D)}, R, J)$ and fix a thickening ideal $I$. Let $\sigma$ be a two-cell with boundary one-cells
$\tau_+$, $\tau_-$. Choose points $P_+,P_-\in\sigma$ very close to $\tau_+$,
$\tau_-$ respectively. Choose a path $\gamma$ connecting $P_+$ to $P_-$.
Such a path selects a primitive normal vector $n_{\mathfrak{d}}$ to each ray through the origin in $\sigma$, pairing negatively with the tangent vector of the path. By assumption there are only finitely many rays $\mathfrak{d}$ passing through the origin such that $f(\mathfrak{d})$ is not 1 mod $I$. To each such ray take $\theta_{\mathfrak{d}}$ to be the automorphism defined on a toric monomial $z^p$, $p\in \Gamma(\Delta_{(S,D)}/\sigma,\phi_{\sigma})$ by
\[\theta_{\mathfrak{d}} (z^p) = z^p f(\mathfrak{d}) ^{\langle n_{\mathfrak{d}}, r(p)\rangle}\]
Now we define the automorphism $\theta_{\sigma}$ to be the ordered composition of the $\theta_{\mathfrak{d}}$ as one moves along $\gamma$. This yields
an automorphism of $U_{\sigma}$. In addition, the definition of $U_{\tau}$
is modified slightly using $f(\tau)$, see \cite{MirrorSymmetryforLogCY1},
Equation (2.7). Thus we obtain a modification of the $U_{\tau}$'s and 
a modification of the gluing inclusions $U_{\tau_+}\supset U_{\sigma} \subset
U_{\tau_-}$. This gives a new glued scheme which depends on the
scattering diagram and the ideal $I$. Consistent scattering diagrams then have the property
that the associated rings have a natural basis of regular functions $\{\vartheta_p\,|\,
p\in\Delta_{(S,D)}(\ZZ)\}$ which yield an embedding in an affine scheme
over $\Spec A_I$. One of the main theorems of \cite{MirrorSymmetryforLogCY1},
namely Theorem 3.8, states the canonical scattering diagram is consistent. Importantly there is an algorithmic way of computing consistent
scattering diagrams in certain situations, and a way of reducing the
calculation of the canonical scattering diagram to these situations.

So for the moment, consider a case where $\Delta_{(S,D)}$ is in fact affine isomorphic to $\RR^2$ and $\phi$ is single-valued. Then the sheaf $\mathcal{M}^+$
extends across $0$, with the stalk at $0$ being $\Gamma(\Delta_{(S,D)},\phi)$.
In this construction $\mathcal{R}_0$ is the completion of $k[\mathcal{M}^+_0]$ at its maximal
monomial ideal. All of the automorphisms $\theta_{\sigma}$ act on $\mathcal{R}_0$. Our construction so far has not attempted to incorporate the monoid ring $\mathcal{R}_0$. Take a cyclic ordering of the two-cells $\sigma_1, \ldots \sigma_n$ and let $\theta$ be the composite $\theta_{\sigma_n} \circ \ldots \circ \theta_{\sigma_1}$ considered as a map $\mathcal{R}_0 \rightarrow \mathcal{R}_0$. Then the 
scattering diagram is consistent if the map $\theta$ is the identity, see
\cite{ATropicalViewOfLandauGinzburgModels} and 
\cite{MirrorSymmetryforLogCY1}, Theorem 3.30. Let us give an example calculation.

\begin{example}
  Any toric variety relative to its toric boundary has trivial canonical scattering diagram. Therefore we will find no interesting examples here. Let us give an example motivated instead by $dP_5$: we will see shortly the connection.

  Our initial scattering diagram is $\RR^2$ with two incoming rays $(1 + z^{(1,0)})$ and $(1+z^{(0,1)})$:
  \[\begin{tikzpicture} \draw[-<] (0,0) -- (1,0); \draw[-<] (0,0) -- (0,1); \draw (1,0)--(2,0); \draw (0,1) -- (0,2);\end{tikzpicture}\]
  Let us calculate the composition $\theta$ along a circle anti-clockwise around the origin starting at $(-1,-2)$ acting on $z^{(1,0)}$ and $z^{(0,1)}$. The first wall we meet is $\mathfrak{d}_1=\RR_{\ge 0}(1,0)$. Applying the above definition of the automorphism
$\theta$:
  \[\theta_{\mathfrak{d}_1} (z^{(1,0)}) = z^{(1,0)} (1 + z^{(1,0)}) ^{\langle (1,0),(0,-1) \rangle} = z^{(1,0)}\]
  \[\theta_{\mathfrak{d}_1} (z^{(0,1)}) = z^{(0,1)} (1 + z^{(1,0)}) ^{\langle (0,1),(0,-1) \rangle} = z^{(0,1)}(1 + z^{(1,0)})^{-1}\]
  Continuing along our path we meet the wall $\mathfrak{d}_2=\RR_{\ge 0}(0,1)$. Here $\theta_{\mathfrak{d}_2}$ is given by:
  \[\theta_{\mathfrak{d}_2} (z^{(1,0)}) = z^{(1,0)} (1 + z^{(0,1)}) ^{\langle(1,0),(1,0) \rangle} = z^{(1,0)}(1+z^{(0,1)})\]
  \[\theta_{\mathfrak{d}_2} (z^{(0,1)}) = z^{(0,1)} (1 + z^{(0,1)}) ^{\langle(0,1),(1,0) \rangle} = z^{(0,1)}\]
  Therefore the composition $\theta$ sends $z^{(1,0)}$ to $z^{(1,0)}(1 + z^{(0,1)})$ and $z^{(0,1)}$ to $z^{(0,1)} (1 + z^{(1,0)} + z^{(1,1)})^{-1}$. To first order this looks as if we are missing outgoing rays with monomials $(1+z^{(1,0)})$ and $(1+z^{(0,1)})$. Let us add those in and then repeat this calculation.
  \[\begin{tikzpicture} \draw[-<] (0,0) -- (1,0); \draw[-<] (0,0) -- (0,1); \draw (1,0)--(2,0); \draw (0,1) -- (0,2);\draw[->] (0,0) -- (-1,0); \draw[->] (0,0) -- (0,-1); \draw (-1,0)--(-2,0); \draw (0,-1) -- (0,-2);\end{tikzpicture}\]
  Taking again a path anti-clockwise starting at $(-1,-2)$ we first meet the wall $\mathfrak{d}_3=\RR_{\ge 0}(0,-1)$. The associated $\theta_{\mathfrak{d}_3}$ acts by:
    \[\theta_{\mathfrak{d}_3} (z^{(1,0)}) = z^{(1,0)} (1 + z^{(0,1)}) ^{\langle(1,0),(-1,0) \rangle} = z^{(1,0)}(1+z^{(0,1)})^{-1}\]
  \[\theta_{\mathfrak{d}_3} (z^{(0,1)}) = z^{(0,1)} (1 + z^{(0,1)}) ^{\langle(0,1),(-1,0) \rangle} = z^{(0,1)}\]
  The next wall is $\mathfrak{d}_1$, which again acts as before. The third wall we meet is $\mathfrak{d}_2$, again acting as before. The final wall we encounter is $\mathfrak{d}_4=\RR_{\ge 0}(-1,0)$, where the associated $\theta$ is given by:
  \[\theta_{\mathfrak{d}_4} (z^{(1,0)}) = z^{(1,0)} (1 + z^{(1,0)}) ^{\langle (1,0),(0,1) \rangle} = z^{(1,0)}\]
  \[\theta_{\mathfrak{d}_4} (z^{(0,1)}) = z^{(0,1)} (1 + z^{(1,0)}) ^{\langle (0,1),(0,1) \rangle} = z^{(0,1)}(1 + z^{(1,0)})\]
  One easily calculates the composite of all of these and one finds that to second order the composite acts by $z^{(1,0)} \mapsto z^{(1,0)} (1 + z^{(1,1)} + \ldots)$ so it appears that we are missing an outgoing ray carrying monomial $(1 + z^{(1,1)})$. Introducing such a ray to obtain a new scattering diagram
  \[\begin{tikzpicture} \draw[-<] (0,0) -- (1,0); \draw[-<] (0,0) -- (0,1); \draw (1,0)--(2,0); \draw (0,1) -- (0,2);\draw[->] (0,0) -- (-1,0); \draw[->] (0,0) -- (0,-1); \draw (-1,0)--(-2,0); \draw (0,-1) -- (0,-2);\draw[->] (0,0) -- (-1,-1); \draw (-1,-1) -- (-2,-2);\end{tikzpicture}\]
 Now one calculates $\theta$ is the identity, so we have finally constructed a consistent scattering diagram.
  
\end{example}

The term-by-term nature of the above construction seems to suggest that in the background lurks a general procedure. The Kontsevich-Soibelman lemma of~\cite{AffineStructuresAndNonArchimedeanAnalyticSpaces} proves that from any scattering diagram embedded in $\RR^2$ (i.e,  one with no singularities at the origin) one can construct a consistent scattering diagram by adding in only outgoing rays. In particular one can perform this correction order-by-order as in our example making it computable. This is explicitly described in the following lemma.

\begin{lemma}[The Kontsevich-Soibelman lemma]

  Let $(B,f,R,J)$ be a scattering diagram with $B \cong \RR^2$ as affine manifolds. Let $P$ be a point of $B$ contained inside a two-cell $\sigma$. The ordered composite of the functions $\theta$ along a path $\gamma$ around the origin with $\gamma(0) = \gamma(1) = P$ gives rise to an automorphism $\theta_{\gamma}$ of the stalk of sheaf $\mathcal{R}$ at zero. There exists another scattering diagram $(B, f', R, J)$ such that the corresponding automorphism $\theta_{\gamma}$ is the identity and such that for each ray $\mathfrak{d}$ one has
\[ f' (\mathfrak{d}) = f(\mathfrak{d}) (1 + \sum a_i z^{m_i})\]
where the $z^{m_i}$ are all outgoing.

\end{lemma}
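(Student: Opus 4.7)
The plan is to proceed by induction on the powers of $J$, exploiting the fact that modulo $J^{n+1}$ only finitely many rays carry nontrivial data and so $\theta_\gamma$ defines an honest automorphism of the Artinian quotient $\mathcal{R}_0/J^{n+1}$. I take as inductive hypothesis that outgoing rays have already been inserted so that the modified scattering diagram $f^{(n)}$ makes $\theta_\gamma$ the identity modulo $J^n$, the base case $n=1$ being trivial. The goal of the inductive step is then to append further outgoing rays whose attached monomials all lie in $J^n$ so that the new $\theta_\gamma$ is the identity modulo $J^{n+1}$.

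For the step I would analyse the discrepancy $\theta_\gamma(z^p) = z^p(1 + \epsilon_p)$ with $\epsilon_p \in J^n/J^{n+1}$. The multiplicative property of the automorphism $\theta_\gamma$ forces $\epsilon_{p+q} \equiv \epsilon_p + \epsilon_q \pmod{J^{n+1}}$, so $p \mapsto \epsilon_p$ behaves like a derivation and decomposes as a finite sum $\epsilon_p = \sum_m a_m \langle n_m, r(p)\rangle z^m$, where $m$ ranges over a finite set of exponents in $J^n$ and $n_m$ is the primitive covector vanishing on $r(m)$ whose sign is fixed by requiring positivity along the intended outgoing direction. This normal form is precisely the decomposition of $\log\theta_\gamma$ into elementary derivations, each of which is the infinitesimal counterpart of crossing a single outgoing ray.

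Given this decomposition, I would correct the diagram by appending, for each monomial $z^m$ appearing in the discrepancy with nonzero coefficient $a_m$, a new ray $\mathfrak{d}_m := -\RR^{\geq 0} r(m)$ carrying the factor $1 - a_m z^m$. This is outgoing by construction since $-r(m)$ generates $\mathfrak{d}_m$, and inserting $\mathfrak{d}_m$ into the path-ordered composite contributes precisely the elementary automorphism $z^p \mapsto z^p(1 - a_m z^m)^{\langle n_m, r(p)\rangle}$ needed to cancel the $z^m$-term of $\epsilon_p$. Since the newly added factors all have exponents in $J^n$, any two of them commute modulo $J^{2n} \subseteq J^{n+1}$, so the $\mathfrak{d}_m$ can be inserted in any order along $\gamma$ and the cancellation is exact.

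The main obstacle is the canonical decomposition of $\log\theta_\gamma$ into elementary derivations realisable by outgoing rays, and it is here that the assumption $B \cong \RR^2$ is essential: in the plane the covector $n_m$ is determined up to sign by $r(m)$, so any derivation of the above form can be uniquely written as a sum of pieces each indexed by a single outward ray direction, which is exactly what identifies the correction with an outgoing rather than incoming wall. To complete the argument I would pass to the inverse limit: for each fixed $n$ only finitely many rays have been modified modulo $J^{n+1}$ and each ray accumulates only finitely many factors at each order, so the $f^{(n)}$ assemble into a well-defined scattering diagram $f'$ with $\theta_\gamma = \mathrm{id}$ and with $f'(\mathfrak{d})/f(\mathfrak{d})$ of the required outgoing form on every ray.
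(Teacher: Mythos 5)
The paper does not actually prove this lemma: it states it and points to Kontsevich--Soibelman's original paper, and the worked example preceding the statement (the two-incoming-rays calculation modelled on $dP_5$) is offered only as an informal illustration of the order-by-order correction. So there is no in-paper proof to compare against; your proposal must stand on its own, and its inductive, order-by-order structure is indeed the standard argument for this result.

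The one genuine gap is the existence of the decomposition
$\epsilon_p = \sum_m a_m \langle n_m, r(p)\rangle z^m$ \emph{with} $n_m$ the primitive covector annihilating $r(m)$. You assert this and then observe that in $\RR^2$ such an $n_m$ is determined up to sign by $r(m)$ -- but that addresses uniqueness, not existence. A priori, the additivity of $p \mapsto \epsilon_p$ together with the fact that $\theta_\gamma$ is a $k[M]$-algebra map only shows $\epsilon_p$ is a linear function of $r(p)$, i.e.\ each monomial coefficient has the form $\langle n, r(p)\rangle z^m$ for \emph{some} covector $n$; nothing yet forces $n(r(m)) = 0$, and without that constraint the correcting factor $1 - a_m z^m$ on the ray $-\RR^{\geq 0} r(m)$ does not reproduce the right automorphism. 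The missing ingredient is that the derivations $z^m \partial_n$ with $n(r(m))=0$ form a Lie subalgebra (a one-line computation with the bracket $[z^{m_1}\partial_{n_1}, z^{m_2}\partial_{n_2}] = z^{m_1+m_2}\bigl(n_1(r(m_2))\partial_{n_2} - n_2(r(m_1))\partial_{n_1}\bigr)$), that each $\log\theta_{\mathfrak d}$ belongs to it because $r(q)$ is tangent to $\mathfrak d$ for every monomial $z^q$ in $f(\mathfrak d)$, and hence that $\log\theta_\gamma$ does too. Moreover, the genuinely two-dimensional input enters a step earlier than you place it: in $\RR^2$ a bracket of two such derivations with $r(m_1) + r(m_2) = 0$ automatically vanishes (the covectors then annihilate the same line and so the structure constants $n_i(r(m_j))$ are all zero), which is what rules out ``central'' terms $z^m\partial_n$ with $r(m)=0$ in $\log\theta_\gamma$ -- terms that could never be cancelled by adding a ray, outgoing or otherwise. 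With these two observations in place the rest of your argument -- that the new factors have exponents in $J^n$, commute with everything modulo $J^{n+1}$, and can therefore be inserted anywhere along $\gamma$ to effect the exact cancellation -- is correct, and the passage to the limit is routine.
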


A key result of Gross, Hacking and Keel, Theorem 3.25 of~\cite{MirrorSymmetryforLogCY1} explores how to flatten the base of the scattering diagram by using \emph{toric models}. A toric model of $(S,D)$ is a choice of birational morphism $\phi:(S, D) \rightarrow (S',D')$ obtained by blowing down curves in the interior of $S$ meeting $D$ in precisely one point such that $S'$ is toric and $D'$ the toric boundary. This restricts to an isomorphism $D \cong D'$ and so induces a map of dual complexes which ``flattens'' the singularities in the sense that the ensuing dual intersection complex is isomorphic to $\RR^2$ as an affine manifold. In the philosophy of Gross Hacking and Keel it pushes the singularities out to infinity. We reincorporate the singularities by instead including them into the scattering diagram as rays carrying expressions $1+z^{[E_i]+\phi(v_i)}$ 
where $E_i$ is an exceptional curve meeting $D_i$ and $v_i$ is the primitive
generator of the ray in $\Delta_{(S',D')}$ corresponding to $D_i$. We make this construction explicit in the example in the next section. There is a canonical morphism $\nu: \Delta_{(S,D)} \rightarrow \Delta _{(S',D)}$ and in Lemmas 3.27 and 3.28 of~\cite{MirrorSymmetryforLogCY1} the authors prove that this induces an isomorphism of sheaves $\mathcal{R}(S,D) \cong \mathcal{R} (S',D)$ together with a bijection of broken lines between $\Delta_{(S,D)}$ and $\Delta _{(S',D)}$. To be precise we have the following theorem of~\cite{MirrorSymmetryforLogCY1}:

\begin{theorem}
  Let $(S,D)$ be a Looijenga pair with $(S',D)$ a toric model. Then the complex $\Delta_{S',D}$ is isomorphic as an affine manifold to $\RR^2$. Let $(\RR^2, f_{S'}, \underline{M}, \underline{M} \setminus 0)$ be the scattering diagram with values in $A_1(S)$ on $\Delta_{(S',D')}$ whose value along $\RR_{\ge 0}v_i$ is the product of incoming rays $\prod_j( 1 + z^{[E_{i,j}]+\phi(v_i)})$ and outgoing rays $\prod_j(1 + z^{-[E_{i,j}]+\phi(v_i)})^{-1}$ where the $E_{i,j}$ are the blown-down curves meeting $D_i$. Since this satisfies the assumptions of the Kontsevich-Soibelman lemma, we let $\bar{f}_{S'}$ be the associated consistent diagram.

  Let $\bar{f}_{S}$ be the canonical scattering diagram on $\Delta_{(S,D)}$. Locally $\Delta_{(S,D)}$ and $\Delta_{(S',D')}$ are isomorphic and we can evaluate $f_{S'}$, $\bar f_S$ and $\bar f_{S'}$ on the same line $\mathfrak{d}$. 
Then one has an equality
\[ \bar{f}_{S'} (\mathfrak{d}) = \bar{f}_S (\mathfrak{d}) f_{S'} (\mathfrak{d}).\]
this has the effect of replacing the base of the scattering diagram by an affine manifold isomorphic to $\RR^2$ and swapping the directions of rays $( 1 + z^{[E_{i,j}]+\phi(v_i)})$ from outgoing to incoming. This is seen explicitly in the example in the next section.
\end{theorem}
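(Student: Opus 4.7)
The plan is to split the theorem into three claims, prove them in order, and invoke uniqueness from the Kontsevich-Soibelman lemma at the end to identify the two consistent diagrams.

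First I would verify that $\Delta_{(S',D)} \cong \RR^2$ as affine manifolds. Since $\phi:(S,D)\to (S',D')$ contracts only interior curves meeting $D$ in a single point, the restriction $\phi|_D:D\to D'$ is an isomorphism of cycles of rational curves, so as a cone complex $\Delta_{(S',D)} = \Delta_{(S',D')}$. The self-intersection numbers on $S'$ (which govern the transition charts in the Construction following the toric reconstruction lemma) are exactly the fan data for the toric variety $S'$, and the toric reconstruction lemma (already cited) gives that this fan embeds in $\RR^2$. So $\Delta_{(S',D)}$ is affine-isomorphic to $\RR^2$, with the singularity of $\Delta_{(S,D)}$ at the origin effectively pushed to infinity.

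Next I would set up the comparison of rings and broken lines via the contraction $\nu:\Delta_{(S,D)}\to\Delta_{(S',D)}$. Since $\nu$ is a piecewise linear homeomorphism, it induces an identification of the sheaves of monoids and their completions (using the stated Lemmas 3.27--3.28 of \cite{MirrorSymmetryforLogCY1}) and a canonical bijection between broken lines in $\Delta_{(S,D)}$ and broken lines in $\Delta_{(S',D)}$. In particular $\bar f_S$ pulls across to a scattering diagram on $\RR^2$, and by inspection of the formula $f(\mathfrak{d}) = \exp[\sum k_\beta N_\beta z^{\eta(\pi_*\beta)-k_\beta m'_{\mathfrak{d}}}]$ the canonical diagram is purely \emph{outgoing}: every monomial $z^{p}$ appearing has $r(p)=-k_\beta m_{\mathfrak{d}}$, a negative tangent direction. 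The prescribed incoming factors of $f_{S'}$ (one $(1+z^{[E_{i,j}]+\phi(v_i)})$ for each exceptional curve $E_{i,j}$ contracted by $\phi$) together with their outgoing inverses form the \emph{only} incoming content of $\bar f_S\cdot f_{S'}$.

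Now I would argue consistency of $\bar f_S\cdot f_{S'}$ on $\Delta_{(S',D)}$. This is the heart of the proof. By Theorem 3.8 of \cite{MirrorSymmetryforLogCY1}, $\bar f_S$ is consistent on $\Delta_{(S,D)}$, i.e., the ordered $\theta$-composition around the origin is the identity on the stalk $\mathcal{R}_0$. When we push everything across $\nu$, the monodromy concentrated at the affine singularity of $\Delta_{(S,D)}$ is smeared out into extra path-ordered factors acquired from crossing the rays $\RR_{\ge 0} v_i$ carrying $f_{S'}$; the identity $\bar f_{S'}(\mathfrak{d}) = \bar f_S(\mathfrak{d}) f_{S'}(\mathfrak{d})$ is precisely the statement that the two sources of wall-crossing automorphisms (interior scattering from GW invariants of $(S,D)$ and the singular monodromy transmitted by the contracted exceptional curves) recombine into a consistent diagram on the flat base. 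Concretely, a broken line on $\Delta_{(S',D)}$ either stays inside one maximal cone and matches a broken line on $\Delta_{(S,D)}$ that does the same, or it crosses an exceptional ray $\RR_{\ge 0} v_i$ and picks up a contribution from the $f_{S'}$-factor on that ray which corresponds to the sub-broken-line wrapping around the origin of $\Delta_{(S,D)}$. This matching is exactly the broken-line bijection of Step 2.

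Finally I would conclude by the uniqueness part of the Kontsevich-Soibelman lemma: there is a unique consistent scattering diagram on $\RR^2$ with prescribed incoming rays equal to those of $f_{S'}$ and all additional rays outgoing. Both $\bar f_{S'}$ (by its definition as Kontsevich-Soibelman completion of $f_{S'}$) and $\bar f_S\cdot f_{S'}$ (by Step 3) meet these conditions, so they coincide. The main obstacle I expect is Step 3: carefully matching the path-ordering of $\theta$-compositions across cones of $\Delta_{(S,D)}$ with the corresponding ordering on $\Delta_{(S',D)}$, because $\nu$ is not an affine isomorphism and the tangent directions to broken lines are modified by the monodromy each time a line crosses an exceptional ray; keeping the bookkeeping consistent with signs and with the relative ordering of walls is where the argument is delicate.
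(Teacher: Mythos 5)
The paper does not prove this statement itself: it is attributed directly to Theorem 3.25 together with Lemmas 3.27--3.28 of~\cite{MirrorSymmetryforLogCY1} (``To be precise we have the following theorem of\ldots''), so there is no paper-internal proof to compare against. Your reconstruction follows the same logical skeleton as the argument in that reference: affine triviality of $\Delta_{(S',D')}$ via the toric reconstruction lemma, the $\nu$-correspondence of monoid sheaves and broken lines from Lemmas 3.27--3.28, consistency of the canonical diagram from Theorem 3.8, and identification with the Kontsevich--Soibelman completion by uniqueness.

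That said, your Step 3 is a genuine gap rather than merely a delicate point to tidy up. You want to deduce consistency of $\bar f_S \cdot f_{S'}$ on $\RR^2$ from consistency of $\bar f_S$ on $\Delta_{(S,D)}$, but the two notions are formulated differently. On $\RR^2$ consistency means the ordered $\theta$-composition around the origin is the identity on $\mathcal R_0$; on the singular base $\Delta_{(S,D)}$ the origin is removed, there is no stalk $\mathcal R_0$ to act on, and consistency is instead a condition on well-definedness of the theta-function products (GHK Definition 2.26). Converting one into the other is the actual content of the theorem: after pushing across $\nu$, the holonomy of $\Lambda_B$ around the deleted singular point must be exactly compensated by the wall-crossing automorphisms one picks up from the inserted $f_{S'}$-walls, and it is precisely because the $f_{S'}$-factors take the specific form $\prod_j(1 + z^{[E_{i,j}] + \phi(v_i)})$ that this compensation succeeds. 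Your proposal asserts this cancellation (``smeared out into extra path-ordered factors'') but does not verify it; without that verification the uniqueness clause in Step 4 has nothing to bite on, since uniqueness in the Kontsevich--Soibelman lemma only identifies two diagrams once you already know both are consistent with the same incoming rays.
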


  After possibly blowing up some double points of $D$, we can then blow down appropriate curves to construct toric models, see \cite{MirrorSymmetryforLogCY1}, Proposition 1.3. This allows one to calculate the original scattering diagram by applying the Kontsevich-Soibelman lemma to $f_{S'}$. An implementation of this algorithm in Sage is available alongside this paper.

  \section{Explicit formulae}

Let us practice constructing toric models in the cases we are interested in.

\begin{example}
  Recall the dual intersection complex for $dP_5$:

  \[\begin{tikzpicture} \fill[lightgray] (0,0) -- (2,0) -- (2,-2); \draw (0,0) -- (2,0); \draw (0,0)--(0,2); \draw (0,0) -- (-2,2); \draw (0,0) -- (-2,0); \draw (0,0) -- (0,-2); \draw[dashed] (0,0) -- (2,-2); \end{tikzpicture}\]
  
  This is certainly not affine isomorphic to $\RR^2$. The boundary has five components, and this suggests that we should attempt to find a toric
model isomorphic to $dP_7$, since there is a choice of boundary divisor there also consisting of five components. A model for $dP_7$ together with its toric boundary is contained below. The red lines are a triangle in $\PP^2$, the
red circles are blown up to obtain $dP_7$, and the black points are further
blown up to obtain $dP_5$ with $dP_7$ as its toric model.
  \[\begin{tikzpicture} \draw [red] (0,0) circle (2pt); \draw [red] (2,0) circle (2pt); \fill (0,2) circle (1pt); \node at (0.15,2) {1}; \fill (0.4,0.4) circle (1pt); \node at (0.55,0.55) {2}; \draw [red] (-1,0) -- (3,0); \draw [red] (0,-1)--(0,3); \draw [red] (3, -0.25) -- (-1, 0.75); \end{tikzpicture}\]
The corresponding dual complex is given by
  \[\begin{tikzpicture} \draw (0,0) -- (2,0); \draw (0,0)--(0,2); \draw (0,0) -- (-2,2); \draw (0,0) -- (-2,0); \draw (0,0) -- (0,-2); \end{tikzpicture}\]
In particular the self intersection of every component of the boundary for $dP_5$ was -1, whereas two of the components of the boundary of $dP_7$ have self intersection 0.

  We now incorporate this into the construction of the scattering diagram. Initially after making a choice of ordering we have have the scattering diagram:
\[\begin{tikzpicture} \draw[-<] (0,0) -- (1,0); \draw[-<] (0,0) -- (0,-1); \draw (1,0)--(2,0); \draw (0,-1) -- (0,-2); \node at (3.2, 0) {$1 + z^{E_1+\phi(v_1)}$}; \node at (0, -2.4) {$1 + z^{E_2+\phi(v_2)}$};\end{tikzpicture}\]
and we saw in our motivation for the Kontsevich-Soibelman lemma that this produces a consistent diagram
\[\begin{tikzpicture} \draw[-<] (0,0) -- (1,0); \draw[-<] (0,0) -- (0,-1); \draw (1,0)--(2,0); \draw (0,-1) -- (0,-2);\draw[->] (0,0) -- (-1,0); \draw[->] (0,0) -- (0,1); \draw (-1,0)--(-2,0); \draw (0,1) -- (0,2);\draw[->] (0,0) -- (-1,1); \draw (-1,1) -- (-2,2); \node at (2.2,0.3) {$1 + z^{E_1 + \phi(v_1)}$}; \node at (-2.2,-0.3) {$1 + z^{E_1 + \phi(v_1)}$}; \node at (-2.2,2.7) {$1 + z^{E_1 + \phi(v_1)}$}; \node at (-2.2,2.2) {$+z^{E_2 + \phi(v_2)}$}; \node at (-0,-2.4) {$1 + z^{E_2 + \phi(v_2)}$};\node at (0.2,2.4) {$1 + z^{E_2 + \phi(v_2)}$};\end{tikzpicture}\]
which agrees up to changing the direction of the incoming rays and pushing forwards with the scattering diagram found in~\cite{MirrorSymmetryforLogCY1} Figure 1.1. This is exactly as predicted by the Kontsevich-Soibelman Lemma.
\label{ex:monodromycalculation}
\end{example}

The attached implementation is optimised in three ways. Firstly one only is interested in terms of order $n$, so we may reduce all the series modulo terms of higher order and use efficient power calculations to do so. Secondly this can be distributed over multiple cores. Thirdly each ray stores a list of all powers already calculated so as to minimise repeat calculations. I would appreciate any input on how to significantly speed up this algorithm.

Since by the previous section only a finite collection of classes need be calculated we may terminate the calculation once those coefficients have been obtained and substitute them into the formulae \eqref{formulaestart}-\eqref{formulaeend} we found using the techniques in section 2.2.2.

Recall that the Picard group of $dP_d$ is generated by a hyperplane class $H$ and the class of the exceptional curves $E_1, \ldots E_{9-d}$. By using the attached implementation of the Kontsevich-Soibelman lemma one can read off the desired coefficients and the resulting families are contained in Table~\ref{explicitequations}.
 \newpage
\begin{center}
  \begin{table}
\begin{tabular}{l|ll}
  Surface & Equations & Super-potential \\
  $dP_5$ & $\vartheta_{1} \vartheta_3 = z^{[D_2]}\vartheta_2 + z^{[D_4] +[D_5]}$ & $\sum \vartheta_i$ \\
  & $\vartheta_2 \vartheta_4 = z^{[D_3]}\vartheta_3 + z^{[D_1]+[D_4]}$  & \\
  & $\vartheta_3 \vartheta_5 = z^{[D_4]}\vartheta_4 + z^{[D_1]+[D_2]}$  & \\
  & $\vartheta_4 \vartheta_1 = z^{[D_5]}\vartheta_5 + z^{[D_2]+[D_3]}$  & \\
  & $\vartheta_5 \vartheta_2 = z^{[D_1]}\vartheta_1 + z^{[D_3]+[D_4]}$  & \\
   & & \\
  $dP_4$ & $\vartheta_1 \vartheta_3 = z^{[D_2]}\vartheta_{2} + z^{[D_4]} \vartheta_4 +$  & $\sum \vartheta_i$\\
   & \quad $z^{H-E_1} + z^{2H - E_1 - E_2 - E_3 - E_5} + z^{2H - E_1 - E_2 - E_4 - E_5}$ & \\
  & $\vartheta_2 \vartheta_4 = z^{[D_1]} \vartheta_1 + z^{[D_3]} \vartheta_3 +$ & \\
  & \quad $z^{H-E_3} + z^{H-E_4} + z^{2H- E_2 - E_3 - E_4 - E_5}$ & \\
  & & \\
  $dP_3$ & $\vartheta_1 \vartheta_2 \vartheta_3 = z^{[D_1]}\vartheta_1^2 + z^{[D_2]} \vartheta_2^2 + z^{[D_3]}\vartheta_{3}^2 + $ & $\sum \vartheta_i$ \\
  & \quad $z^{[D_1]} \vartheta_1 (z^{E_1} + z^{E_2} + z^{H- E_3 - E_5} + z^{H - E_3 - E_6} + $ & \\
  & \quad $z^{H- E_4 - E_5} + z^{H- E_4 - E_6}) + z^{[D_2]} \vartheta_2 (z^{E_3} + z^{E_4} +$ & \\
  & \quad $z^{H- E_1 - E_5} + z^{H- E_1 - E_6} + z^{H- E_2 - E_5} + z^{H- E_2 - E_6})+$ & \\
  & \quad $z^{[D_3]} \vartheta_3 (z^{E_5} + z^{E_6} + z^{H- E_1 - E_3} + z^{H - E_1 - E_4}$ & \\
  & \quad $z^{H - E_2 - E_3} + z^{H - E_2 - E_4}) + z^H + $ & \\
  & \quad $(z^{2H}  + z^{4H - \sum E_i}) \alpha +$ & \\
  & \quad $z^{3H-\sum E_i}(z^{E_2-E_1} + z^{E_1-E_2} + z^{E_4-E_3} + z^{E_3-E_4} +$ & \\
  & \quad $z^{E_6-E_5} + z^{E_6-E_5})+ z^{5H-2 \sum E_i} - 4z^{[D]}$ & \\
  & $ \alpha = (z^{-E_1} + z^{-E_2})(z^{-E_3} + z^{-E_4})(z^{-E_5} + z^{-E_6})$ & \\
  & &
\end{tabular}
  \caption{Explicit equations for mirror families}
  \label{explicitequations}
\end{table}
\end{center}

The expression for the mirror family to $dP_3$ agrees with that found in~\cite{MirrorSymmetryforLogCY1}, Example 6.13. To express the formulae for $dP_2$ requires some additional notation. In this case we had to blow up two additional points which lay on the intersection of the chosen conic $C$ and line $L$. Let $E_8$, $E_9$ be the exceptional divisors lying over the intersections points of $C$ and $L$. Let us relabel the $\vartheta_i$ by the cohomology class of the corresponding \Lawrence{boundary divisor, e.g., writing $\vartheta_{D_i}$ for $\vartheta_i$}. There is an action of $C_2$ exchanging $E_1$ and $E_2$, an action of $S_5$ permuting $E_3$ through $E_7$ and an action of $C_2$ exchanging $E_8$ and $E_9$. When we write $\prod (1 +z^{h H + \sum a_i E_i})$ we mean to take the product of $ 1+z^{h[H] + \sum a_i [E_i]}$ over elements of the orbit under the action of $C_2 \times S_5$. In particular we choose here representatives with $a_1 \geq a_2$ and $a_3$ through $a_7$ decreasing. For instance $\prod (1+ z^{H+E_3})$ would be the product $(1+z^{H+E_3})(1 + z^{H+E_4})(1 + z^{H+E_5})(1 + z^{H+E_6})(1 + z^{H+E_7})$.
\Lawrence{Searching through the output of our implementation of the Kontsevich-Soibelman lemma we are able to find the coefficients we were missing in the equations \eqref{formulaestart} to \eqref{formulaeend} for the mirror family to $dP_2$ relative to our choice of boundary. After calculating these we find that the mirror family is given by}
\begin{align*}
\vartheta_C \vartheta_L &= z^{E_8} \vartheta_{E_8} + z^{E_9} \vartheta_{E_9} + 3 z^{C + L + 2E_8 + 2E_9} + B_1(E_8)z^{E_8}\\
\vartheta_{E_8} \vartheta _{E_9} &= \vartheta_{3L} z^L + \vartheta_{2L} B_1 (L) z^{L} + \vartheta_{L} B_2 (L) z^{L} \\
 & \quad + z^{C} B_3 (C) + 3 z^{2C + 2L + 3E_8 + 3E_9} + \vartheta_C B_2 (C) z^{C} \\
& \quad + \vartheta_{2C} B_1 (C) z^{C} + \vartheta_{3C} z^C\\
\vartheta_{C}^3 &= \vartheta_{3C} + 3 B_1 (L) z^{L+E_8+E_9} \vartheta_{C} + 6z^{C+2L+3E_8+3E_9} + 6 B_1 (E_9)\\
 & \quad z^{L+E_8+2E_9} + 3 z^{L + 2E_8 + E_9}\vartheta_{E_8} + 3 z^{L+ E_8 + 2E_9} \vartheta_{E_9}\\
\vartheta_L^3 &= \vartheta_{3L} + 3 B_1 (C) z^{C+E_8+E_9} \vartheta_L + 6z^{2C+L+3E_8+3E_9} + 6 B_1 (E_8)\\
 & \quad z^{C+2E_8+E_9} + 3 z^{C + E_8 + 2E_9} \vartheta_{E_9} + 3 z^{C + 2E_8 + E_9} \vartheta_{E_8}\\
\vartheta_{C}^2 &= \vartheta_{2C} + 2\vartheta_{L} z^{L+E_8+E_9} + 2B_1 (L) z^{L+E_8+E_9} \\
\vartheta_{L}^2 &= \vartheta_{2L} + 2\vartheta_{C} z^{C+E_8+E_9} + 2B_1 (C) z^{C+E_8+E_9}\\
\end{align*}
where the coefficients $B_k(E)$ are terms in the scattering diagram related to the relative invariants of curves meeting the curve $E$ in one point with tangency order $k$. The term $B_i (-)$ is the coefficient of $t^i$ in the product $\prod N_k (-,t^k)$, where the $N_k(-,t)$ are given by the following formulae:
\begin{align*}
  N_3 (C, t) &=  \prod (1+9 tz^{2H - E_1 - E_2 - E_3}) \prod ( 1+ 9tz^{3H - 2E_1 - E_2 - E_3 - E_4 - E_5} ) \prod ( 1 +9tz^{4H - 2 E_1 - 2 E_2 - 2 E_3 - E_4 - E_5 - E_6}) \\ &  \prod ( 1 +9tz^{4H - 3 E_1 - E_2 - E_3 - E_4 - E_5 - E_6 - E_7}) \prod (1 + 72tz^{4H - 2E_1 - 2 E_2 - E_3 - E_4 - E_5 - E_6 - E_7}) \\ & \prod ( 1 +9tz^{5H - 3E_1 - 2E_2 - 2E_3 - 2E_4 - E_5 - E_6 - E_7} ) \prod ( 1 +9tz^{6H - 3E_1 - 3E_2 - 2E_3 - 2E_4 - 2E_5 - 2E_6 - E_7})\\
  N_2 (C, t) &=   \prod ( 1+ 4tz^{H-E_1} ) \prod ( 1 +4tz^{2H - E_1 - E_2 - E_3 - E_4} ) \prod ( 1 +4tz^{3H - 2E_1 - E_2 - E_3 - E_4 - E_5 - E_6}) \\ & \prod ( 1 +4tz^{4H - 2E_1 - 2E_2 - 2E_3 - E_4 - E_5 - E_6 - E_7})\\
  N_1 (C, t) &= \prod ( 1+ tz^{E_3} ) \prod ( 1 +tz^{H-E_1 - E_3} ) \prod ( 1 +tz^{2H - E_1 - E_2 - E_3 - E_4 - E_5}) \\  & \prod ( 1 +tz^{3H - 2E_1 - 2E_2 - 2E_3 - E_4 - E_5 - E_6 })\\
  N_3 (L, t) &= \prod(1 + 9tz^{3H - 2E_3 - E_4 - E_5 - E_6 - E_7} ) \prod ( 1 +9tz^{4H - E_1 - 2E_3 - 2E_4 - 2E_5 - E_6 - E_7}) \\ & \prod ( 1 +9tz^{5H - E_1 - E_2 - 3 E_3 - 2 E_4 - 2E_5 - 2E_6 - E_7}) \prod ( 1 + 72tz^{5H - E_1 - E_2 - 2E_3 - 2E_4 - 2E_5 - 2E_6 - 2E_7}) \\ & \prod ( 1 +9tz^{5H - 2 E_1 - 2E_3 - 2E_4 - 2E_5 - 2E_6 - 2E_7} ) \prod ( 1 +9tz^{6H - 2E_1 - E_2 - 3E_3 - 3E_4 - 2E_5 - 2E_6 - 2E_7})\\ & \prod ( 1 +9tz^{7H - 2E_1 - 2E_2 - 3E_3 - 3E_4 - 3E_5 - 3E_6 - 2E_7})\\
  N_2 (L, t) &=  \prod (1 + 4tz^{2H - E_3 - E_4 - E_5 - E_6} ) \prod ( 1 +4tz^{3H - E_1 - 2E_3 - E_4 - E_5 -E_6 - E_7})\\ & \prod ( 1 +4tz^{4H - E_1 - E_2 - 2E_3 - 2E_4 - 2E_5 - E_6 - E_7} ) \prod ( 1 +4tz^{5H - 2E_1 - E_2 - 2E_3 - 2E_4 - 2E_5 - 2E_6 - 2E_7})\\
  N_1 (L, t) &= \prod ( 1 + tz^{E_1} ) \prod ( 1 +tz^{H - E_3 - E_4} ) \prod ( 1 +tz^{2H - E_1 - E_3 - E_4 - E_5 - E_6}) \\ & \prod ( 1 +tz^{3H - E_1 - E_2 - 2E_3 - E_4 - E_5 - E_6 - E_7})\\
\end{align*}
\begin{align*}
  N_1 (E_8, t) &= \prod (1+ tz^{H-E_3-E_8} ) \prod ( 1 +tz^{2H - E_1 - E_3 - E_4 - E_5 - E_8}) \prod(1 + 9 tz^{3H - E_1 - E_2 - E_3 - E_4 - E_5  - E_6 - E_7 - E_8}) \\ & \prod ( 1 +tz^{3 H - E_1 - E_2 - 2E_3 - E_4 - E_5 - E_6 - E_8} ) \prod ( 1 +tz^{3H - 2 E_1 - E_3 - E_4 - E_5 - E_6 - E_7 - E_8}) \\ & \prod ( 1 +tz^{4H - 2E_1 - E_2 - 2 E_3 - 2 E_4 - E_5 - E_6 - E_7 - E_8} ) \prod ( 1 +tz^{5H - 2E_1 - 2 E_2 - 2 E_3 - 2E_4 - 2E_5 - 2E_6 - E_7 - E_8})\\
  N_1 (E_9, t) &= \prod (1 + tz^{H-E_3-E_9} ) \prod ( 1 +tz^{2H - E_1 - E_3 - E_4 - E_5 - E_9}) \prod (1 + 9 tz^{3H - E_1 - E_2 - E_3 - E_4 - E_5  - E_6 - E_7 - E_9}) \\ & \prod ( 1 +tz^{3 H - E_1 - E_2 - 2E_3 - E_4 - E_5 - E_6 - E_9} ) \prod ( 1 +tz^{3H - 2 E_1 - E_3 - E_4 - E_5 - E_6 - E_7 - E_9}) \\ & \prod ( 1 +tz^{4H - 2E_1 - E_2 - 2 E_3 - 2 E_4 - E_5 - E_6 - E_7 - E_9} ) \prod ( 1 +tz^{5H - 2E_1 - 2 E_2 - 2 E_3 - 2E_4 - 2E_5 - 2E_6 - E_7 - E_9})\\
\end{align*}
After setting all of the monomials equal to one we find that the values of $B_i(-)$ reduce to $6561$, $459$, $27$, $6561$, $459$, $27$, $81$ and $81$ respectively and this fibre of the family is smooth, we will use this in calculations later. We expect that each fibre of this family is a double cover of the plane. This can be seen by projecting onto $\vartheta_C$ and $\vartheta_L$, then the equations are entirely symmetric under exchanging these two variables, noting of course that $z^C B_3(C) = z^L B_3 (L)$.

Unfortunately this is a deformation of the four-vertex, not of the two-vertex. This stems from our decision to blow up two additional points at the start, and so the family we actually want is the restriction to the locus $z^{E_8} = z^{E_9} = 1$. Setting $z^C = z^L = 0$ we see that coordinates on the central fibre are $\vartheta_C, \vartheta_L$ and $\vartheta_{E_8} - \vartheta_{E_9}$. Eliminating the extra variables we obtain an equation for the family:
\begin{align} (\vartheta_{E_8} - \vartheta_{E_9})^2 &= \vartheta_C^2\vartheta_L^2 - 4\vartheta_C^3z^C - 4\vartheta_L^3z^L + 18\vartheta_C\vartheta_Lz^{C+L} - 27z^{2C+2L} - 4\vartheta_L^2z^LB_1(L)  - 4\vartheta_C^2z^CB_1(C) \nonumber \\ & \quad + 20\vartheta_Cz^{C+L}B_1(L) + 20\vartheta_Lz^{C+L}B_1(C) + 16z^{C+L}B_1(L)B_1(C) - 4\vartheta_Lz^LB_2(L) \nonumber \\ &\quad - 4\vartheta_Cz^CB_2(C) - 2\vartheta_C\vartheta_LB_1(E_8) + 30z^{C+L}B_1(E_8) - 4z^CB_3(C) - B_1(E_8)^2 \label{dP2family} \end{align}
Now we see clearly the leading order terms $\vartheta_C^2 \vartheta_L^2 = (\vartheta_{E_8} - \vartheta_{E_9})^2$, giving rise to the two-vertex over the origin. The appropriate super-potential remains $\vartheta_C + \vartheta_L$.

One could ask if there is a better way to present the data hiding in the scattering diagram. In the cases of $dP_5$ and $dP_4$ the canonical scattering diagram is finite and can be computed, as was done in Example 3.7 of~\cite{MirrorSymmetryforLogCY1} for one of these. By an observation of Gross, Hacking and Keel the entire scattering diagram for $dP_3$ is generated by the series associated to a single ray under the birational automorphism group of this surface, generated by a finite collection of classes. This breaks down in lower degrees, as far as I am aware there is no known closed form expression for the scattering diagram associated to the final surface.

There are however more elegant ways to describe terms arising in the equation, as was seen in Example 6.13 of~\cite{MirrorSymmetryforLogCY1}. Following these ideas we can describe the terms arising in equation~\ref{dP2family}. To do this we write $N_i(-,t)^{(j)}$ for the coefficient of $t^j$ in $N_i(-,t)$. Then we find the following birational descriptions:

\begin{tabular}{ll}
  Term & Description \\
  $N_1(C,t)^{(1)}z^C$ & The sum of $\frac{1}{16}z^{\pi^*(H-E)}$ over all contractions to $dP_8$ obtained by contracting \\ & one $-1$-curve meeting $C$ and five meeting $L$, none of which meet $\pi^*E$ or $\pi^* H$. \\
  $N_1(L,t)^{(1)}z^L$ & The sum of $\frac{1}{16}z^{\pi^*(H-E)}$ over all contractions to $dP_8$ obtained by contracting \\ &  one $-1$-curve meeting $L$ and five meeting $C$, none of which meet $\pi^*E$ or $\pi^* H$. \\
  $N_1 (C,t)^{(2)} z^{C}$ & The sum of $z^{\pi^* H}$ over all contractions to $\PP^2$ obtained by blowing down\\ & two $-1$-curves meeting $C$ and five meeting $L$. \\
  $N_1 (L,t)^{(2)} z^{L}$ & The sum of $z^{\pi^* H}$ over all contractions to $\PP^2$ obtained by blowing down\\ & two $-1$-curves meeting $L$ and five meeting $C$. \\
  \end{tabular}

\begin{tabular}{ll}
  Term & Description \\
  $N_2(C,t)^{(1)}z^C$ & The sum of $4 z^{\pi^*\text{-} K_{\bar{S}}}$ over all contractions of a single $-1$-curve meeting $L$\\ & and not meeting $C$. \\
  $N_2(L,t)^{(1)}z^C$ & The sum of $4 z^{\pi^*\text{-} K_{\bar{S}}}$ over all contractions of a single $-1$-curve meeting $C$\\ & and not meeting $L$. \\
  $N_1 (E_8, t)^{(1)} z^{E_8}$ & The sum of $z^{\pi^*(H-E)}$ over all contractions to $dP_8$ obtained by contracting\\ & four $-1$-curves meeting one of $C$ or $L$ and three meeting the other.
  \end{tabular}
The term $\frac{1}{16}$ in $N_1(C,t)^{(1)}z^C$ arises because given one of these classes $\alpha$ there are sixteen $-1$-curves (each of degree at most three) meeting $\alpha$ and $C$ but not meeting $L$. Fixing any one of these produces a unique contraction contributing the desired monomial. In other words a subgroup of the birational automorphism group of the surface stabilises the class $\alpha$. Finally we observe by direct computation that $z^CN_3(C,t)^{(1)} = 9z^{C+L}N_1(E_8,t)^{(1)} - 9z^{-2K_S}$. This gives a birational interpretation of every term appearing in equation~\ref{dP2family}.

\section{Comparisons and evidence}

We begin this section by comparing the predictions of~\cite{MirrorSymmetryForDelPezzoSurfacesVanishingCyclesAndCoherentSheaves} for the mirror families. These predictions state the mirror to $dP_n$ ought to be a compactification of a fibration of the two-torus $W:\mathbb{G}_m^2 \rightarrow \CC$ with an $I_{n}$ fibre at infinity. Here $W$ is the super-potential. For example,
in the case of $dP_5$ we can take equations:
\[x_{i+1} x_{i-1} = x_i + 1\]
defining a surface inside $\AA^5$ with $W=\sum x_i$. It turns out this
is already a partial compactification of the mirror to $dP_5$.
To further compactify, we take the projective closure of the above
surface, taking coordinates $Z_1,\ldots,Z_5$ and $T$ to obtain equations
\[
Z_{i+1}Z_{i-1}=Z_i T + T^2.
\]
Then $T$, $W$ can be viewed as sections of ${\mathcal O}(1)$ on this
surface, hence giving a pencil. After resolving the base-points of this
pencil, one obtains the desired compactification. 
 The fibre at infinity therefore has equations
\[Z_{i-1} Z_{i+1} = 0\]
 for $i \in \{1, 2, 3,4,5\}$  inside $\PP^4$, hence defining a cycle of five planes. This is exactly as predicted by~\cite{MirrorSymmetryForDelPezzoSurfacesVanishingCyclesAndCoherentSheaves}. For the other del Pezzo surfaces
$dP_n$ with $n\ge 4$, the same shape of the mirror as defined by
quadratic equations confirms that the fibre at infinity will be an $I_n$ fibre.
More careful considerations with the equations show the same for $n=3$ and $2$.

There are two other tests available to us to determine whether the families described above are indeed the mirror families. The first we implement here, whilst the second is of wider interest.

\subsection{Dimension of the Jacobian ring}

The rank of the quantum cohomology is equal to the rank of the regular cohomology, and therefore we already know the expected rank of all of the Jacobian rings in each case we have constructed. If $S$ is a del Pezzo surface of degree $d$ then the rank of the middle homology is $10-d$, generated by a hyperplane and the exceptional curves. Therefore the rank of the quantum cohomology should be $12-d$. Now let $(\check{V}, \check{W})$ be a smooth fibre of the family such that $\check{W}$ has isolated critical points. The rank of the Jacobian ring of $\check{W}$ should then be equal to $12-d$. This is the weakest test since it tells us no enumerative information about the surface. This rank is at least sensitive to deforming the equations defining the mirror.

Let us attempt this calculation for the case of the mirror to $dP_2$. First recall that the super-potential on Landau-Ginzburg model is the partial sum $x_C + x_L$. If one took the full sum one would construct the mirror to the non-Fano double blow up of $dP_2$, and we do not believe that this mirror construction should apply.

\begin{example}

  Recall that the Jacobian ring of a potential $W: \CC^2 \rightarrow \CC$ is the quotient \[ \CC [u_1, u_2] / \langle \partial \check{W} / \partial u_i \rangle \]
  We are not quite in this situation and so we need to be more careful. The Jacobian ring of $(S,W: S \rightarrow \CC)$ is the function ring of the critical points of $W$. In our case $S$ is defined by two equations in $\AA^4$, $f_1$ and $f_2$. Therefore we are interested in the points of $S$ where $\nabla W$ is contained in the span of $\nabla f_1$ and $\nabla f_2$. This asks that all the $3 \times 3$ minors of the following matrix vanish

  \[
M=
  \begin{bmatrix}
    \partial f_1 / \partial \vartheta_C & \partial f_1 / \partial \vartheta_L & \partial f_1 / \partial \vartheta_8 & \partial f_1 / \partial \vartheta_9 \\
    \partial f_2 / \partial \vartheta_C & \partial f_2 / \partial \vartheta_L & \partial f_2 / \partial \vartheta_8 & \partial f_2 / \partial \vartheta_9 \\
    \partial W / \partial \vartheta_C & \partial W / \partial \vartheta_L & \partial W / \partial \vartheta_8 & \partial W / \partial \vartheta_9 \\
    
  \end{bmatrix}
  \]
  
  This can be calculated using Sage and generating elements of the defining ideal are:
  \begin{multline}(\vartheta_9 + 2)\vartheta_C + 3\vartheta_C^2 - (\vartheta_9 + 2)\vartheta_L - 3\vartheta_L^2 + 58\vartheta_C - 58\vartheta_L, (\vartheta_8 + 2)\vartheta_C + 3\vartheta_C^2 - (\vartheta_8 + 2)\vartheta_L - 3\vartheta_L^2 + 58\vartheta_C - 58\vartheta_L, \\ -\vartheta_8 + \vartheta_9, -\vartheta_8 + \vartheta_9, \vartheta_C\vartheta_L - \vartheta_8 - \vartheta_9 - 84, \\ -\vartheta_C^3 - \vartheta_L^3 + \vartheta_8\vartheta_9 - 27\vartheta_C^2 + 4\vartheta_C\vartheta_L - 27\vartheta_L^2 + 2\vartheta_8 + 2\vartheta_9 - 324\vartheta_C - 324\vartheta_L - 3000 \end{multline}
  Sage allows us to calculate a $k$-basis of the quotient ring and we find that the quotient is 10-dimensional as a vector space, exactly as hoped for.
\end{example}

\subsection{Structure of the Jacobian ring}

In favourable situations we may calculate both structures, the quantum cohomology and the Jacobian ring. The quantum cohomology for high degree del Pezzo surfaces was calculated in~\cite{QuantumCohomologyOfRationalSurfaces}. In section 4 they produce a list of relevant curve classes and in section 5 describe the product as a sum over those classes.

The question is then what the conjectured isomorphism is. For a divisor $D$ it was suggested in~\cite{LagrangianIntersectionFloerTheoryAnomalyAndObstruction} that this map should send $D$ to the derivative of $W$ with respect to the log vector field $z^{D} \partial z^D$. This of course leaves the question of where a point should be sent. This can be computed by computing products on both sides, for instance once we know where a hyperplane is sent we may compute its square to find the class of a point, subtracting any quantum corrections.

To do this calculation explicitly would be un-enlightening. Rather it would be an interesting project to attempt to automate this construction. Sage has the necessary computational tools to carry this out. In future work I hope to perform the same analysis of the quantum cohomology as was done in~\cite{QuantumCohomologyOfRationalSurfaces}, but for the degree two del Pezzo surface. The already know examples would provide a fertile collection of examples with which to demonstrate the algorithm, whilst simultaneously allowing us to demonstrate that our construction here is correct.

\bibliographystyle{plain}
\bibliography{Draft}

\begin{thebibliography}{10}

\bibitem{HulyaThesis}
N.~H. Argüz.
\newblock {\em Log Geometric Techniques for Open Invariants in Mirror
  Symmetry}.
\newblock PhD thesis, University of Hamburg, 2016.

\bibitem{MirrorSymmetryAndTDualityInTheComplementOfAnAnticanonicalDivisor}
Denis Auroux.
\newblock Mirror symmetry and {$T$}-duality in the complement of an
  anticanonical divisor.
\newblock {\em J. G\"okova Geom. Topol. GGT}, 1:51--91, 2007.

\bibitem{AurouxKatzarkovOrlov}
Denis Auroux, Ludmil Katzarkov, and Dmitri Orlov.
\newblock {M}irror {S}ymmetry for del {P}ezzo surfaces: Vanishing cycles and
  coherent sheaves.
\newblock {\em Inventiones mathematicae}, 166(3):537--582, Dec 2006.

\bibitem{MirrorSymmetryForDelPezzoSurfacesVanishingCyclesAndCoherentSheaves}
Denis Auroux, Ludmil Katzarkov, and Dmitri Orlov.
\newblock Mirror symmetry for del {P}ezzo surfaces: Vanishing cycles and
  coherent sheaves.
\newblock {\em Inventiones Mathematicae}, 166(3):537--582, 12 2006.

\bibitem{GromovWittenInvariantsInAlgebraicGeometry}
K.~Behrend.
\newblock {G}romov-{W}itten invariants in algebraic geometry.
\newblock {\em Inventiones mathematicae}, 127(3):601--617, Sep 1997.

\bibitem{EffectOfLegendrianSurgery}
F.~Bourgeois, T.~Ekholm, and Y.~Eliashberg.
\newblock Effect of legendrian surgery.
\newblock {\em Geom. Topol.}, 16(1):301--389, 2012.

\bibitem{ATropicalViewOfLandauGinzburgModels}
Michael Carl, Max Pumperla, and Bernd Siebert.
\newblock A tropical view of {L}andau-{G}inzburg models.
\newblock 04 2018.

\bibitem{TheGreedyBasisEqualsTheThetaBasis}
Man~Wai Cheung, Mark Gross, Greg Muller, Gregg Musiker, Dylan Rupel, Salvatore
  Stella, and Harold Williams.
\newblock The greedy basis equals the theta basis: A rank two haiku.
\newblock {\em Journal of Combinatorial Theory, Series A}, 145:150 -- 171,
  2017.

\bibitem{FloerCohomologyAndDiscInstantonsOfLagrangianTorusFibersInFanoToricManifolds}
Cheol-Hyun Cho and Yong-Geun Oh.
\newblock Floer cohomology and disc instantons of {L}agrangian torus fibers in
  {F}ano toric manifolds.
\newblock {\em Asian J. Math.}, 10(4):773--814, 2006.

\bibitem{MirrorSymmetryAndAlgebraicGeometry}
D.A. Cox and S.~Katz.
\newblock {\em Mirror Symmetry and Algebraic Geometry}.
\newblock Mathematical surveys and monographs. American Mathematical Society,
  1999.

\bibitem{ToricVarieties}
D.A. Cox, J.B. Little, and H.K. Schenck.
\newblock {\em Toric Varieties}.
\newblock Graduate studies in mathematics. American Mathematical Soc., 2011.

\bibitem{QuantumCohomologyOfRationalSurfaces}
Bruce Crauder and Rick Miranda.
\newblock Quantum cohomology of rational surfaces.
\newblock In Robbert~H. Dijkgraaf, Carel~F. Faber, and Gerard B.~M. van~der
  Geer, editors, {\em The Moduli Space of Curves}, pages 33--80, Boston, MA,
  1995. Birkh{\"a}user Boston.

\bibitem{LagrangianIntersectionFloerTheoryAnomalyAndObstruction}
K.~Fukaya, Y.G. Oh, H.~Ohta, and K.~Ono.
\newblock {\em Lagrangian Intersection Floer Theory: Anomaly and Obstruction}.
\newblock Number pt. 1 in AMS/IP studies in advanced mathematics. American
  Mathematical Society, 2009.

\bibitem{TropicalGeometryAndMirrorSymmetry}
M.~Gross.
\newblock {\em Tropical Geometry and Mirror Symmetry}.
\newblock Conference Board of the Mathematical Sciences regional conference
  series in mathematics. Conference Board of the Mathematical Sciences, 2011.

\bibitem{GrossHackingSiebert}
M.~{Gross}, P.~{Hacking}, and B.~{Siebert}.
\newblock {Theta functions on varieties with effective anti-canonical class}.
\newblock {\em ArXiv e-prints}, January 2016.

\bibitem{MirrorSymmetryforLogCY1}
Mark Gross, Paul Hacking, and Sean Keel.
\newblock Mirror symmetry for log {C}alabi-{Y}au surfaces {I}.
\newblock {\em Publications math{\'e}matiques de l'IH{\'E}S}, 122(1):65--168,
  Nov 2015.

\bibitem{CanonicalBasesForClusterAlgebras}
Mark Gross, Paul Hacking, Sean Keel, and Maxim Kontsevich.
\newblock Canonical bases for cluster algebras.
\newblock {\em J. Amer. Math. Soc.}, 31(2):497--608, 2018.

\bibitem{TheTropicalVertex}
Mark Gross, Rahul Pandharipande, and Bernd Siebert.
\newblock The tropical vertex.
\newblock {\em Duke Math. J.}, 153(2):297--362, 06 2010.

\bibitem{AnInvitationToToricDegenerations}
Mark Gross and Bernd Siebert.
\newblock An invitation to toric degenerations.
\newblock In {\em Surveys in differential geometry. {V}olume {XVI}. {G}eometry
  of special holonomy and related topics}, volume~16 of {\em Surv. Differ.
  Geom.}, pages 43--78. Int. Press, Somerville, MA, 2011.

\bibitem{LogarithmicGromovWittenInvariants}
Mark Gross and Bernd Siebert.
\newblock Logarithmic {G}romov-{W}itten invariants.
\newblock {\em J. Amer. Math. Soc.}, 26(2):451--510, 2013.

\bibitem{GrossSiebert}
Mark Gross and Bernd Siebert.
\newblock Theta functions and mirror symmetry.
\newblock In {\em Surveys in differential geometry 2016. {A}dvances in geometry
  and mathematical physics}, volume~21 of {\em Surv. Differ. Geom.}, pages
  95--138. Int. Press, Somerville, MA, 2016.

\bibitem{AffineStructuresAndNonArchimedeanAnalyticSpaces}
Maxim Kontsevich and Yan Soibelman.
\newblock Affine structures and non-archimedean analytic spaces.
\newblock In Pavel Etingof, Vladimir Retakh, and I.~M. Singer, editors, {\em
  The Unity of Mathematics: In Honor of the Ninetieth Birthday of I.M.
  Gelfand}, pages 321--385. Birkh{\"a}user Boston, Boston, MA, 2006.

\bibitem{RationalSurfacesWithAnAntiCanonicalCycle}
Eduard Looijenga.
\newblock Rational surfaces with an anti-canonical cycle.
\newblock {\em Annals of Mathematics}, 114(2):267--322, 1981.

\bibitem{AmoebasOfAlgebraicVarietiesAndTropicalGeometry}
Grigory Mikhalkin.
\newblock {\em Amoebas of Algebraic Varieties and Tropical Geometry}, pages
  257--300.
\newblock Springer US, Boston, MA, 2004.

\bibitem{AnAnalyticConstructionOfDegeneratingAbelianVarietiesOverCompleteRings}
David Mumford.
\newblock An analytic construction of degenerating {A}belian varieties over
  complete rings.
\newblock {\em Compositio Mathematica}, 24(3):239--272, 1972.

\end{thebibliography}

\quad \newline
\noindent
\texttt{National Center for Theoretical Sciences\\ No. 1 Sec. 4
Roosevelt Rd., National Taiwan University\\ Taipei, 106, Taiwan}

\end{document}